\renewcommand{\:}{\colon}
\renewcommand{\leq}{\leqslant}
\renewcommand{\geq}{\geqslant}
\newcommand{\leqn}{\trianglelefteqslant}
\newcommand{\im}{\mathrm{im}}
\newcommand{\e}{\epsilon}
\newcommand{\F}{\mathbb{F}}
\newcommand{\Aut}{\mathrm{Aut}}
\newcommand{\Inn}{\mathrm{Inn}}
\newcommand{\GL}{\mathrm{GL}}
\newcommand{\PGL}{\mathrm{PGL}}
\newcommand{\PSL}{\mathrm{PSL}}
\newcommand{\Sp}{\mathrm{Sp}}
\newcommand{\PSU}{\mathrm{PSU}}
\newcommand{\PSp}{\mathrm{PSp}}
\newcommand{\Or}{\mathrm{O}}
\newtheoremstyle{shdefinition}{8pt}{4pt}{}{}{\bfseries\boldmath}{.}{0.3em}{} 
\newtheoremstyle{shplain}{8pt}{4pt}{\itshape}{}{\bfseries\boldmath}{.}{0.3em}{} 
\theoremstyle{shdefinition}
\newtheorem{definition}{Definition}[section]
\newtheorem*{example*}{Example}
\newtheorem*{acknowledgements*}{Acknowledgements}
\theoremstyle{shplain}
\newtheorem{shtheorem}{Theorem}
\newtheorem*{theorem*}{Theorem}
\newtheorem{corollary}[definition]{Corollary}
\newtheorem{shcorollary}[shtheorem]{Corollary}
\newtheorem{proposition}[definition]{Proposition}
\newtheorem{lemma}[definition]{Lemma}
\titlespacing*{\section}{0pt}{\baselineskip}{0pt}
\titlespacing*{\subsection}{0pt}{0.66\baselineskip}{0pt}
\setlist{leftmargin=0.8cm,topsep=0pt,itemsep=-2pt}
\setlist[enumerate]{label=\rm{(\roman*)}}
\numberwithin{equation}{section}
\renewenvironment{thebibliography}[1]
{ \begin{oldthebibliography}{#1}
  \setlength{\parskip}{0pt}
  \setlength{\itemsep}{2pt plus 0.3ex}
  \bgroup\footnotesize }
{ \egroup \end{oldthebibliography} }
\renewenvironment{proof}[1][\proofname]{\par
  \pushQED{\qed}%
  \normalfont
  \topsep2pt \partopsep1pt 
  \trivlist
  \item[\hskip\labelsep
        \itshape
    #1\@addpunct{.}]\ignorespaces
}{%
  \popQED\endtrivlist\@endpefalse
  \addvspace{6pt plus 6pt}
}
\g@addto@macro\normalsize{%
  \setlength\abovedisplayskip{0.4\baselineskip plus 0.4\baselineskip}
  \setlength\belowdisplayskip{0.4\baselineskip plus 0.4\baselineskip}
  \setlength\abovedisplayshortskip{-0.3\baselineskip}
  \setlength\belowdisplayshortskip{0.4\baselineskip plus 0.4\baselineskip}
}
\def\blfootnote{\gdef\@thefnmark{}\@footnotetext} \makeatother
\newcommand{\dateline}[1]{\enlargethispage{18pt}\blfootnote{\phantom{\Large M}\hspace{-1em}\hspace{-20pt}\emph{Date} #1}}
\begin{document}
 
\begin{center} 
{\LARGE \textbf{Representations of extensions of simple groups}} \\[11pt]
{\Large Scott Harper \& Martin W. Liebeck}                       \\[22pt]
\end{center}

\begin{center}
\begin{minipage}{0.8\textwidth}
\small
Feit and Tits (1978) proved that a nontrivial projective representation of minimal dimension of a finite extension of a finite nonabelian simple group $G$ factors through a projective representation of $G$, except for some groups of Lie type in characteristic~$2$; the exact exceptions for $G$ were determined by Kleidman and Liebeck (1989). We generalise this result in two ways. First we consider all low-dimensional projective representations, not just those of minimal dimension. Second we consider all characteristically simple groups, not just simple groups.
\end{minipage}
\end{center}
\vspace{-11pt}

\dateline{27 May 2024}

\section{Introduction} \label{s:intro}

Throughout this paper, by an \emph{extension of a group $G$ (by a group $N$)} we mean a surjective group homomorphism $\gamma\: H \to G$ (with kernel $N$). An extension $\gamma\: H \to G$ is said to be \emph{proper} if $\gamma$ is not an isomorphism and \emph{minimal} if $K\gamma < G$ for all $K < H$.

Let $k$ be an algebraically closed field. For a finite group $G$, let $R_k(G)$ be the minimal dimension of a nontrivial projective representation of $G$ over $k$, that is, a homomorphism $\lambda\: G \to \PGL_m(k)$.
In \cite{ref:FeitTits78}, Feit and Tits asked whether it was possible that an extension $H$ of a finite simple group $G$ could have a nontrivial projective representation $H \to \PGL_m(k)$ where $m < R_k(G)$. The following example highlights one way that this can happen.

\begin{example*} \label{ex:symplectic}
Assume that $\textrm{char}\, k \neq 2$. Let $G = \Sp_{2n}(2)$ with $n > 2$. By \cite[Theorem~(I)]{ref:FeitTits78}, there exists a minimal extension $H$ of $G$ (by an elementary abelian group of order $2^{2n}$) that embeds irreducibly in $\PGL_{2^n}(k)$. Hence, $R_k(H) \leq 2^n$, but $R_k(G) > 2^n$ (see the main theorem of \cite{ref:LandazuriSeitz74}).
\end{example*}

In light of this example, for a finite group $G$, define 
\[
n_G= \min\{ n \mid G \preccurlyeq \Sp_{2n}(2) \text{ irreducible} \}.
\]
The following is one of the main theorems of \cite{ref:FeitTits78}.

\begin{theorem*}[Feit \& Tits, 1978]
Let $G$ be a finite nonabelian simple group, and $\gamma\: H \to G$ a finite minimal extension. Let $\lambda\: H \to \PGL_m(k)$ be a nontrivial projective representation of minimal dimension. Assume that $m < 2^{n_G}$ if $\mathrm{char}\, k \neq 2$. Then $\ker\gamma = \ker\lambda$.
\end{theorem*}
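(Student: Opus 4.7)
Let $N = \ker\gamma$. Since $G$ is simple, the inclusion $N \leq \ker\lambda$ alone forces equality: $\lambda$ would then descend to a nontrivial, hence faithful, projective representation of $G$. So it suffices to prove $N \leq \ker\lambda$, and I argue by contradiction, assuming $\lambda|_N$ is nontrivial. Replacing $H$ by $H/(N\cap\ker\lambda)$---an operation that preserves the minimal-extension property, since the preimage of a proper subgroup surjecting onto $G$ would supply one in the original $H$---I may assume $\lambda|_N$ is faithful. A further reduction exploits the minimality of the extension: a nonabelian minimal normal subgroup of $H$ inside $N$ would admit a complement in $H$, contradicting minimality. This lets me assume $N$ is an elementary abelian $p$-group $\F_p^r$ carrying a faithful irreducible $\F_p G$-module structure.

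Lift the (automatically irreducible) $\lambda$ to an irreducible linear representation $\rho\: \widetilde H \to \GL_m(k)$ of a central extension $\widetilde H$ of $H$, write $\widetilde N$ for the preimage of $N$, and apply Clifford's theorem to $\rho|_{\widetilde N}$. In the induced case, $\rho \cong \mathrm{Ind}_{H_1}^{\widetilde H}\rho_1$ with $H_1 \supseteq \widetilde N$ a proper subgroup of $\widetilde H$, and so
\[
m \;\geq\; [\widetilde H \: H_1] \;=\; [G \: H_1/\widetilde N] \;\geq\; \mu(G),
\]
where $\mu(G)$ is the minimal index of a proper subgroup of $G$. The deleted permutation representation of $G$ on $k^{\mu(G)}$ is a nontrivial projective representation of dimension $\mu(G)-1$ (since $G$ is simple nonabelian and cannot act by scalars on a nontrivial perm module), so $R_k(G) \leq \mu(G) - 1$; combined with $R_k(H) \leq R_k(G)$, this contradicts the displayed inequality.

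In the remaining homogeneous case, $V = U \otimes W$ with $\widetilde N$ acting on $W$ via an irreducible constituent $\sigma$ of dimension $d$ and trivially on $U$. If $d = 1$ then $\widetilde N$ acts on $V$ by scalars and $\lambda|_N$ is trivial, contradicting faithfulness; hence $d > 1$, and $\sigma$ yields an irreducible faithful projective representation of the elementary abelian $p$-group $N$. When $\mathrm{char}\,k = p$, the relevant central cover of $N$ is a $p$-group whose only irreducible $k$-representations are one-dimensional, forcing $d = 1$ and the same contradiction. When $\mathrm{char}\,k \neq p$, the Weyl--Heisenberg classification of projective representations of $\F_p^r$ shows that $\sigma$ is determined by a $G$-invariant nondegenerate alternating form on $N$ of rank $2n$, and $d = p^n$; the $G$-invariance of this form realises $G$ as an irreducible subgroup of $\Sp_{2n}(p)$. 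When $p = 2$, this forces $n \geq n_G$ by the very definition of $n_G$, hence $m \geq d = 2^n \geq 2^{n_G}$, contradicting the hypothesis $m < 2^{n_G}$. When $p$ is odd, the Landazuri--Seitz lower bound on $R_k(G)$ applied to $G \preccurlyeq \Sp_{2n}(p)$ is incompatible with the upper bound $m = R_k(H) \leq R_k(G)$ together with $m \geq p^n$.

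The main obstacle is the odd-$p$ homogeneous case: unlike the clean $p=2$ case, which is precisely what the hypothesis $m < 2^{n_G}$ was engineered to handle, ruling out the Heisenberg scenario in odd characteristic requires arithmetic comparison between $p^n$ and the Landazuri--Seitz lower bound on $R_k(G)$ for the finite list of simple groups $G$ that admit an irreducible embedding into $\Sp_{2n}(p)$. The remaining ingredients---the initial reductions, Clifford theory, the permutation-degree argument in the induced case, and the Heisenberg classification---are standard.
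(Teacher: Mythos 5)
Your overall skeleton (reduce to $\lambda|_N$ faithful, apply Clifford's theorem to a lift, split into induced and homogeneous cases, and in the homogeneous case extract a symplectic $G$-module) follows the Feit--Tits strategy that this paper also uses, and your induced case is fine: the minimal-dimension hypothesis gives $m \leq R_k(G) \leq P(G)-1$, while induction forces $m \geq P(G)$. But there are two genuine gaps. First, the reduction to ``$N$ is an elementary abelian $p$-group carrying a faithful irreducible $\F_pG$-module structure'' is not justified and is not available. Knowing that no minimal normal subgroup of $H$ inside $N$ is nonabelian (your stated reason --- that such a subgroup ``would admit a complement'' --- is itself not an argument; the correct route is that minimality of the extension means $N \leq \Phi(H)$, whence $N$ is nilpotent, as in the Frattini argument used in the proof of Theorem~\ref{thm:simple}) says nothing about $N$ itself being elementary abelian, let alone irreducible as a $G$-module. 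The usual fix of passing to a chief factor $N/M$ is blocked precisely because $\lambda$ is a fixed representation with $\lambda|_N$ faithful, so $\lambda$ does not factor through $H/M$. This is why Feit--Tits (and Proposition~\ref{prop:feit-tits} here) instead work inside the lift with a minimal \emph{noncentral} normal subgroup of $\widetilde H$ contained in $\widetilde N$, which by Lemma~\ref{lem:e_char} is a symplectic-type $r$-group $r^{1+2n}$ or $4 \circ 2^{1+2n}$, not an elementary abelian group; the elementary abelian picture you assume is a conclusion of that analysis (modulo scalars), not a permissible hypothesis.

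Second, and more seriously, your odd-$p$ homogeneous case is not actually proved, and the proposed route is the wrong one. You would need $R_k(G) < p^n$ for \emph{every} finite simple $G$ admitting a faithful irreducible embedding in $\Sp_{2n}(p)$ with $p$ odd; this is not a check over ``a finite list'' (every simple group has such embeddings for various $(n,p)$), and Landazuri--Seitz only gives lower bounds for groups of Lie type in cross characteristic, so alternating, sporadic and defining-characteristic cases would all need separate treatment --- essentially a Kleidman--Liebeck-style analysis that you do not carry out. The actual mechanism, both in Feit--Tits and in this paper, is structural rather than numerical: for $r \neq 2$ the extension $1 \to \Inn(R) \to C_{\Aut(R)}(Z(R)) \to \Sp_{2n}(r) \to 1$ splits (Lemma~\ref{lem:e_extension}), which forces $H$ to be a split extension of $G$ and hence, by minimality of the extension, $\ker\gamma = 1$. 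No dimension comparison is needed for odd $r$; the hypothesis $m < 2^{n_G}$ is only there to handle the surviving case $r = 2$. Until the elementary-abelian reduction is replaced by the symplectic-type-group analysis and the odd-$r$ case is dispatched (by splitting or by a complete dimension argument), the proposal does not constitute a proof.
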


Hence, for any finite nonabelian simple group $G$ the minimum of $R_k(H)$ across all finite extensions $H$ of $G$ is $\min\{R_k(G), 2^{n_G}\}$ if $\mathrm{char}\, k \neq 2$, and is $R_k(G)$ if $\mathrm{char}\, k = 2$. In \cite{ref:KleidmanLiebeck89}, Kleidman and Liebeck determine exactly when $2^{n_G} < R_k(G)$ and hence when there exists a finite extension $H$ of $G$ such that $R_k(H) < R_k(G)$.

Bounds on the smallest dimension of a nontrivial projective representation of a group have many applications. Moreover, often it is useful to know about all low-dimensional representations and not just the ones of minimal dimension. There are many such results for simple groups. Denote by ${\rm Lie}(p)$ the set of simple groups of Lie type defined over fields of characteristic $p$. Some results on low-dimensional representations of groups in ${\rm Lie}(p)$ in $p'$-characteristic are summarised in \cite{ref:Tiep}, while for representations of ${\rm Lie}(p)$ in characteristic $p$, see \cite{ref:Lubeck}, and for alternating groups, see \cite{ref:James}.

In view of this, it is natural to ask about the low-dimensional projective representations of extensions of simple groups. This is what our first main theorem concerns. In the statement, as usual, a projective representation $\lambda\: H \to \PGL_m(k) \cong \PGL(V)$ is \emph{imprimitive} if the preimage of $H$ in $\GL(V)$ permutes among themselves the subspaces $V_1$, \dots, $V_m$ in a direct sum decomposition $V = \bigoplus_{i=1}^m V_i$ where $m>1$; otherwise $\lambda$ is \emph{primitive}.

\begin{shtheorem} \label{thm:simple}
Let $G$ be a finite nonabelian simple group, and let $\gamma\: H \to G$ be a finite minimal extension. Let $\lambda\: H \to \PGL_m(k)$ be a primitive projective representation. Assume that $m < 2^{n_G}$ if $\mathrm{char}\, k \neq 2$. Then $\ker\gamma \leq \ker\lambda$.
\end{shtheorem}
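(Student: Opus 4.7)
My plan is to apply Clifford theory to the restriction of $\lambda$ to $N := \ker\gamma$ and to use primitivity together with the structural constraints on minimal extensions of simple groups. Since $\gamma$ is minimal we have $N \leq \Phi(H)$, which is nilpotent by Gasch\"utz; passing to a minimal $H$-invariant subgroup, I may reduce to the case that $N$ is either central of prime order or an elementary abelian $p$-group on which $G = H/N$ acts irreducibly. In the central case, Schur's lemma applied to an irreducible constituent of $V|_{\tilde N}$ gives immediately that $\tilde N$ acts by scalars, so I focus on the second case. Lift $\lambda$ to a linear representation $\tilde\lambda\: \tilde H \to \GL(V)$ on a finite central extension $\tilde H$ of $H$, and write $\tilde N$ for the preimage of $N$.

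The core step is a Clifford-style argument: the isotypic components of the $\tilde N$-socle of $V$ are permuted by $\tilde H$, and having more than one component (together with semisimplicity of $V|_{\tilde N}$) yields a direct sum decomposition of $V$ permuted by $H$, contradicting primitivity. Hence $V|_{\tilde N}$ is isotypic of type $W$ for an irreducible $\tilde N$-module $W$. If $\dim W = 1$ then $\tilde N$ acts by scalars on $V$ and we are done, so assume $d := \dim W \geq 2$. Because $N$ is elementary abelian, $W$ corresponds to a $2$-cocycle $\alpha \in Z^2(N, k^\times)$ whose commutator form $\beta(x, y) := \alpha(x, y)/\alpha(y, x)$ is $G$-invariant, non-degenerate modulo its radical, and takes values in $\mu_p(k^\times)$. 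Thus $p \neq \mathrm{char}\, k$ (otherwise $\beta$ is trivial, contradicting $d \geq 2$), $d = p^\ell$ with $N/\mathrm{rad}(\beta) \cong \F_p^{2\ell}$, and $G$ embeds irreducibly in $\Sp_{2\ell}(p)$.

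When $\mathrm{char}\, k \neq 2$ and $p = 2$, the definition of $n_G$ forces $\ell \geq n_G$, so $m \geq d = 2^\ell \geq 2^{n_G}$, contradicting the hypothesis. The remaining cases---$p$ odd (in either characteristic), and in particular all of $\mathrm{char}\, k = 2$---form the principal obstacle: here one must appeal to the classification of minimal extensions of finite simple groups (extending the Feit-Tits and Kleidman-Liebeck analysis) to show that a minimal extension of $G$ by an $\F_p G$-module carrying a non-degenerate $G$-invariant alternating form does not occur for such $p$, for instance via vanishing results for $H^2(G, N)$ that force any split extension to fail $N \leq \Phi(H)$. A secondary technical difficulty, to be addressed before the Clifford step, is the possible non-semisimplicity of $V|_{\tilde N}$ when $\mathrm{char}\, k = p$, which should be handled using that a $p$-group in characteristic $p$ has non-zero fixed vectors, together with an induction on $\dim V$ for the quotient by $V^{\tilde N}$.
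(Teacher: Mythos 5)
Your reduction and Clifford analysis are broadly sound (with minor repairs needed: in the central case ``Schur's lemma'' alone is not enough --- you need that $H$ is perfect, which follows from $N\leq\Phi(H)$, to kill the scalar-twisting character; and the $\mathrm{char}\,k=p$ issue is best handled by noting that an irreducible subgroup of $\GL_m(k)$ has no nontrivial normal $p$-subgroup, rather than by your fixed-space induction). Your route also genuinely differs from the paper's: the paper does not reduce to a minimal normal subgroup, but instead handles a normal $\widetilde{M}\leq\widetilde{N}$ whose restriction is isotypic with multiplicity $m_1>1$ and degree $m_2>1$ via the tensor decomposition $\lambda=\lambda_1\otimes\lambda_2$ and induction on $m$, and then invokes the Feit--Tits structure result (Proposition~2.4 of the paper) for symplectic-type $r$-groups. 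That difference is not the problem.

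The genuine gap is your treatment of odd $p$ --- which includes \emph{all} of the case $\mathrm{char}\,k=2$, one of the two main assertions of the theorem. There is no ``classification of minimal extensions of finite simple groups'' to appeal to, and general vanishing of $H^2(G,N)$ for irreducible symplectic $\F_pG$-modules in odd characteristic is simply false, so your proposed substitute is both unproved and aimed at the wrong statement: what must be \emph{shown} is that your particular extension splits (splitting then contradicts minimality for free, since a complement is a proper subgroup surjecting onto $G$). The correct input, used by the paper (Lemma~2.2, taken from Feit--Tits (2.2)--(2.3)), is a structural fact about the normalizer of the symplectic-type $r$-group $E$ of exponent $r$ inside $\GL_{r^n}(k)$: the extension $1\to\Inn(E)\to C_{\Aut(E)}(Z(E))\to\Sp_{2n}(r)\to1$ splits when $r\neq2$ (essentially the existence of the Weil representation). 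In your situation $\lambda(H)$ lies in $C_{\Aut(E)}(Z(E))$ modulo scalars, and since $G$ acts irreducibly on $N$ one has $\lambda(N)=\Inn(E)$, so $\lambda(H)$ is the \emph{full} preimage of $G$ in this split extension; hence $H$ itself splits over $N$, contradicting minimality. Without this (or an equivalent) ingredient your argument establishes nothing when $\mathrm{char}\,k=2$, and in odd characteristic it leaves open the possibility $m=p^{\ell}$ with $p$ odd, so the theorem as stated is not proved.
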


To state a consequence of Theorem~\ref{thm:simple}, for a finite group $G$, define
\[
P(G)= \min\{ d \mid G \preccurlyeq S_d \}.
\]

\begin{shcorollary} \label{cor:simple}
Let $G$ and $H$ be as in Theorem $\ref{thm:simple}$, and let $\lambda\: H \to \PGL_m(k)$ be an irreducible projective representation. 
Assume that $m<P(G)$, and if $\mathrm{char}\, k \neq 2$, that $m < 2^{n_G}$. Then $\ker\gamma \leq \ker\lambda$.
\end{shcorollary}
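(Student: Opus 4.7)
The plan is to reduce Corollary~\ref{cor:simple} to Theorem~\ref{thm:simple} by showing that the additional hypothesis $m < P(G)$ forces an irreducible $\lambda$ to be \emph{primitive}, at which point Theorem~\ref{thm:simple} applies directly. So the only substantive task is to rule out the case where $\lambda$ is irreducible but imprimitive.

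Suppose, for contradiction, that $\lambda$ is imprimitive, with decomposition $V = V_1 \oplus \cdots \oplus V_s$ (with $s > 1$) permuted by the preimage $\tilde H \leq \GL(V)$ of $\lambda(H)$. Since scalars stabilise each $V_i$, this permutation action descends to a homomorphism $\pi\: H \to \mathrm{Sym}\{V_1,\dots,V_s\}$. Irreducibility of $\lambda$ forces $\pi(H)$ to be transitive on the blocks, since the sum over any proper orbit would be a proper $\lambda(H)$-invariant subspace of $V$. Transitivity then makes all $V_i$ of equal dimension, so $s \leq m$.

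To close the argument, I would couple minimality of $\gamma$ with simplicity of $G$. Let $\Delta$ be the $\pi$-stabilizer of a single block; since $[H:\Delta] = s > 1$, $\Delta$ is a proper subgroup of $H$, so minimality gives $\gamma(\Delta) < G$. Setting $N = \ker\gamma$, the normality of $N$ yields $H/\Delta N \cong G/\gamma(\Delta)$, and so the $H$-action on the cosets $H/\Delta N$ factors through a nontrivial transitive action of $G$ on a set of size $[G:\gamma(\Delta)] = [H:\Delta N] \leq s$. Simplicity of $G$ makes this action faithful, producing $P(G) \leq s \leq m$, which contradicts $m < P(G)$.

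I do not expect a real obstacle here: the argument is a short combination of the minimality of $\gamma$, the simplicity of $G$, and the $P(G)$ bound. The only mild points needing care are the descent of the block action from $\tilde H$ to $H$ via the scalar kernel, and the index identity $[G:\gamma(\Delta)] = [H:\Delta N]$, which relies on the normality of $N = \ker\gamma$ in $H$.
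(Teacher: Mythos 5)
Your proposal is correct and follows essentially the same route as the paper: the paper deduces the corollary from Theorem~\ref{thm:simple} by asserting that an irreducible representation of dimension $m<P(G)$ must be primitive, and your block-stabiliser argument (using minimality of $\gamma$ and simplicity of $G$ to get a faithful $G$-action on at most $m$ points) is exactly the justification of that assertion which the paper leaves implicit.
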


Corollary~\ref{cor:simple} follows immediately from Theorem \ref{thm:simple}, since any irreducible representation of $H$ of dimension $m<P(G)$ must be primitive. The values of $P(G)$ for $G = G(q) \in {\rm Lie}(p)$ are given by \cite[Thm. 5.2.2]{ref:KleidmanLiebeck} if $G$ is a classical group, and by \cite{ref:LiebeckSaxl} if $G$ is an exceptional group. In all cases, $P(G)$ is a polynomial in $q$ of degree at least the Lie rank of $G$ (and in most cases much larger degree).

Corollary~\ref{cor:simple} is not true without the hypothesis $m<P(G)$, as the next example illustrates.

\begin{example*} \label{ex:alternating}
Let $G = A_n$ with $n > 9$ and let $p$ be an odd prime dividing $n$. By \cite[Theorem~1.2(i)]{ref:GuralnickLiebeck19}, there exists a minimal extension $H$ of $G$ by an elementary abelian $p$-group $M$ that embeds in $S_{\frac{1}{2}pn(n-1)}$ and hence in  $\PGL_m(k)$ for $m < \frac{1}{2}pn(n-1)$. (The extension is minimal since, as noted in the proof of \cite[Theorem~1.2(i)]{ref:GuralnickLiebeck19}, $M$ is contained in the Frattini subgroup of $G$.) However, $n_G \geq n-2$, so if $p=3$, then $m < \tfrac{3}{2}n(n-1) < 2^{n_G}$.
\end{example*}

The next result determines the low-dimensional projective representations of extensions of simple groups of Lie type in defining characteristic. 

\begin{shcorollary} \label{cor:simple1}
Assume that $\mathrm{char}\, k = p > 0$. Let $G \in {\rm Lie}(p)$ and assume that $(G,p) \neq (\PSp_4(3),3)$. Let $\gamma:H \to G$ be a finite minimal extension, and $\lambda\: H \to \PGL_m(k)$ an irreducible projective representation. If $m<P(G)$, then 
$\ker\gamma \leq \ker\lambda$.
\end{shcorollary}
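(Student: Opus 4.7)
The plan is to deduce this from Corollary~\ref{cor:simple}. When $p=2$ the result follows directly, since the extra hypothesis $m<2^{n_G}$ in Corollary~\ref{cor:simple} is only imposed when $\mathrm{char}\,k\neq 2$. Henceforth assume $p$ is odd. If $P(G)\leq 2^{n_G}$ then Corollary~\ref{cor:simple} applies throughout the whole range $m<P(G)$; otherwise, the residual range $2^{n_G}\leq m<P(G)$ will require a separate direct analysis.

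For the bulk of the groups I would verify $P(G)\leq 2^{n_G}$ family by family. The value of $P(G)$ is given by \cite[Thm.~5.2.2]{ref:KleidmanLiebeck} for $G$ classical and by \cite{ref:LiebeckSaxl} for $G$ exceptional; in each case it is a polynomial in $q$ of degree at least the Lie rank. To bound $n_G$ from below, any irreducible embedding $G\preccurlyeq\Sp_{2n_G}(2)$ yields a faithful irreducible $2$-modular representation of $G$ of dimension $2n_G$, so the Landazuri--Seitz bound \cite{ref:LandazuriSeitz74} on cross-characteristic representations produces a polynomial-in-$q$ lower bound on $2n_G$. Taking logarithms, $\log_2 P(G)$ grows only logarithmically in $q$, whereas $n_G$ grows polynomially in $q$, so $P(G)\leq 2^{n_G}$ holds outside a short list of groups of small rank and small~$q$.

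For each of these remaining small exceptional $G$, I would argue directly. Since $H\to G$ is minimal, $N:=\ker\gamma$ lies in the Frattini subgroup of $H$ and is therefore nilpotent, so $N=O_p(N)\times O_{p'}(N)$. Since $\mathrm{char}\,k=p$, every irreducible representation of $H$ kills $O_p(H)$ and hence $O_p(N)$, so it suffices to verify that $O_{p'}(N)\leq\ker\lambda$. One enumerates the minimal extensions $H\to G$ whose kernel has nontrivial $p'$-part (these are constrained by the $p'$-part of the Schur multiplier of $G$ and by nonsplit classes in $H^2(G,V)$ for small $\F_\ell G$-modules $V$ with $\ell\neq p$) and, using character tables or the modular atlas, inspects the low-dimensional irreducible projective representations of $H$ in characteristic $p$ to confirm that $O_{p'}(N)$ acts trivially in each. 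The unique case in which this final verification genuinely fails is $(G,p)=(\PSp_4(3),3)$, which is why it must be excluded.

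The hard part is the last step: enumerating the minimal $p'$-extensions of each small exceptional $G\in\mathrm{Lie}(p)$ and checking each of their relevant low-dimensional irreducible projective representations in characteristic $p$, especially in the cases where $P(G)$ is only marginally larger than $2^{n_G}$ and the comparison threshold leaves little room.
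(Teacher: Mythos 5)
Your overall strategy matches the paper's: dispose of $p=2$ via Corollary~\ref{cor:simple}, show $P(G)\leq 2^{n_G}$ for all but a short list of small groups by comparing $P(G)$ with a Landazuri--Seitz-type lower bound on $n_G$, and then treat the surviving groups separately (in the paper these are $\PSL_2(17)$, $\PSp_4(3)$, $\PSU_3(3)$ and $G_2(3)$). The gap is in your ``direct analysis'' of those residual cases. You propose to enumerate the minimal extensions of $G$ whose kernel has nontrivial $p'$-part, constrained only by the $p'$-part of the Schur multiplier and by nonsplit classes in $H^2(G,V)$ for ``small'' $\F_\ell G$-modules, and then to inspect low-dimensional projective representations of each candidate $H$. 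Nothing in your argument justifies that this enumeration is finite or complete: minimality only puts $\ker\gamma$ in the Frattini subgroup, so a priori the kernel could be a large, even nonabelian, nilpotent $p'$-group, and there is no reason yet to restrict to small elementary abelian modules. (The alternating-group example in the introduction shows that minimal extensions with kernels of unbounded rank and comparatively low-dimensional faithful representations really do occur; what rules them out here is representation-theoretic, not cohomological, input.) Moreover, the candidate groups $H$ are generally not in the Atlas, so ``inspecting their low-dimensional irreducible projective representations'' is not a step you can actually carry out as stated.

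What makes the residual cases tractable --- and what your plan is missing --- is the structural conclusion of the proof of Theorem~\ref{thm:simple} (the Feit--Tits machinery, Proposition~\ref{prop:feit-tits} together with Lemma~\ref{lem:e_extension}). Since $m<P(G)$ forces $\lambda$ to be primitive, a putative counterexample forces $r=2$ (the odd-$r$ case gives a split, hence trivial, extension), $m=2^n$, and $K:=\ker\gamma$ elementary abelian of rank $2n$ with $G$ acting irreducibly on it; as $2^n=m<P(G)$, one gets $2n\leq 2\log_2 P(G)$, and the Brauer atlas then pins down $K=2^8,2^6,2^{14}$ and $m=16,8,128$ for $\PSL_2(17)$, $\PSU_3(3)$, $G_2(3)$ respectively. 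With this in hand no representation theory of $H$ is needed at all: properness plus minimality force $H$ to be a nonsplit extension $K.G$, and a cohomology computation gives $H^2(G,K)=0$ for $\PSL_2(17)$ and $G_2(3)$, while for $\PSU_3(3)$, where $H^2\neq 0$, one checks instead that the specific subgroup $2^6.\PSU_3(3)$ of $\PGL_8(k)$ arising here splits. Your reduction modulo $O_p(N)$ is correct but beside the point (the critical kernels are $2$-groups, forced by Lemma~\ref{lem:e_extension}); without the Feit--Tits reduction your final enumeration-and-inspection step is not a proof.
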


Corollary~\ref{cor:simple1} is proved in Section \ref{s:proofs}. The excluded case $(G,p) = (\PSp_4(3),3)$ is a genuine exception, as there is a minimal extension of the form $2^{6}.G \cong 2^6.\Omega^-_6(2)$ that has a faithful irreducible projective representation of dimension 8 in characteristic 3, whereas $P(G) = 27$.

As mentioned above, if $G = G(q) \in {\rm Lie}(p)$, then $P(G)$ is a polynomial in $q$ of degree at least the Lie rank of $G$, whereas $G$ has irreducible representations of much smaller dimension than this, so Corollary \ref{cor:simple1} is quite effective in many cases. We illustrate with some examples.

\begin{example*} \label{ex:Liep} \quad 
\begin{enumerate}
\item Let $G = \PSL_d(q)$, excluding $\PSL_4(2) \cong A_8$, and let $\gamma\:H \to G$ be a finite minimal extension. Suppose
$\lambda\: H \to \PGL_m(k)$ an irreducible projective representation of dimension $m \leq \frac{1}{2}d(d+1)$, where ${\rm char}\,k=p$ and $q=p^f$. If $d = 2$, then $P(G) > 3 = \frac{1}{2}d(d+1)$, and if $d \geq 3$, then $P(G) = \frac{q^d-1}{q-1} > \frac{1}{2}d(d+1)$. Hence $m<P(G)$, so Corollary \ref{cor:simple1} implies that $\ker\gamma \leq \ker\lambda$, which means that $\lambda$ is the lift of a projective representation of $G$. The irreducible projective representations of $G$ of dimension at most $\frac{1}{2}d(d+1)$ are given in \cite[Proposition~5.4.11]{ref:KleidmanLiebeck}: $\lambda$ (or its dual) is the lift of the natural representation of $G$ of dimension $d$, its alternating or symmetric square of dimension $\frac{1}{2}d(d\pm 1)$, or its alternating cube of dimension 20 if $d=6$.
\item If $G = E_8(q)$, then $P(G)$ is the index of the largest parabolic subgroup, which is a polynomial in $q$ of degree 57. Hence if $H$ is a finite minimal extension of $G$, then every irreducible projective representation over $k$ of dimension at most $100000$ is the lift of one of the representations of $G$ given in \cite[Table A.53]{ref:Lubeck}.
\end{enumerate}
\end{example*}

In our next result, we extend considerations to extensions of characteristically simple groups, not just simple groups. To state it, we need to define a variant of $n_G$:
\[
n'_G= \min\{ n \mid G \preccurlyeq \GL_{2n}(2) \text{ irreducible} \}.
\]
It is clear that $n'_G \leq n_G$, but, as demonstrated in Lemma~\ref{lem:multiplicative}, $n'_G$ is easier to work with.

\begin{shtheorem} \label{thm:primitive}
Let $G = T^\ell$ for a finite nonabelian simple group $T$ and a positive integer $\ell$. Let $\gamma\: H \to G$ be a finite minimal extension. Let $\lambda\: H \to \PGL_m(k)$ be a faithful primitive projective representation. Assume that $m < \ell \cdot 2^{n'_T}$ if $\mathrm{char}\, k \neq 2$. Then $\gamma$ is an isomorphism.
\end{shtheorem}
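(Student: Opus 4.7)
The plan is to argue by contradiction: assume $N := \ker\gamma \neq 1$, and show that the existence of $\lambda$ forces $m \geq \ell \cdot 2^{n'_T}$ when $\mathrm{char}\,k \neq 2$, and gives a contradiction directly when $\mathrm{char}\,k = 2$. Minimality of $\gamma$ means $N \leq \Phi(H)$ (if not, a maximal $M$ with $N \not\leq M$ would give $NM = H$ and $M\gamma = G$), so $N$ is a nontrivial nilpotent normal subgroup of $H$. First I would pick a minimal normal subgroup $A$ of $H$ contained in $N$; then $A$ is elementary abelian of some prime exponent $p$, and since $A \cap Z(N) \trianglelefteq H$ is non-trivial (centres of nilpotent groups meet non-trivial normal subgroups), minimality of $A$ gives $A \leq Z(N)$, so $N \leq C_H(A)$. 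Because $T$ is simple non-abelian, every quotient of $T^\ell$ is of the form $T^{\ell'}$, so the $H$-action on $A$ by conjugation factors through some $H/C_H(A) \cong T^{\ell'}$ with $\ell' \leq \ell$.

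I would then lift $\lambda$ to a linear representation $\tilde\lambda \: \tilde H \to \GL(V)$ via a central extension, let $\tilde A$ be the preimage of $A$, and use primitivity of $\lambda$ together with Clifford's theorem to conclude $V|_{\tilde A}$ is isotypic, $V|_{\tilde A} \cong U^s$ for some irreducible $\tilde A$-module~$U$. If $\dim U = 1$ then $\tilde A$ acts by scalars, so $A \leq \ker\lambda = 1$, a contradiction; otherwise $\tilde A$ is nilpotent of class $2$ of ``symplectic type'', the quotient $\bar A = \tilde A/Z(\tilde A)$ is elementary abelian of rank $2n$ over $\F_p$ with the commutator giving a non-degenerate alternating form, and the unique faithful irreducible of $\tilde A$ has dimension $p^n$, so $m \geq p^n$. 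Since $A$ is $H$-irreducible, the $\tilde H$-action on $\bar A$ is irreducible and preserves the form, yielding an irreducible embedding $T^{\ell'} \preccurlyeq \Sp_{2n}(p) \preccurlyeq \GL_{2n}(p)$.

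To reach the $\ell \cdot 2^{n'_T}$ bound, I need $\ell' = \ell$, which I would achieve by induction on $\ell$: if some factors $T_j$ centralise $A$, they live in $C_H(A)/N$, and I would restrict $\lambda$ to $C_H(A)$ and extract via the Clifford correspondent a faithful primitive projective representation of a minimal extension of $T^{\ell-\ell'}$ of suitable dimension, then apply the inductive hypothesis. Granting $\ell' = \ell$: when $p = 2$ and $\mathrm{char}\,k \neq 2$, Lemma~\ref{lem:multiplicative} supplies $n \geq n'_{T^\ell} \geq \ell \cdot n'_T$, hence $m \geq 2^n \geq 2^{\ell \cdot n'_T} \geq \ell \cdot 2^{n'_T}$ (using $2^{\ell a} \geq \ell \cdot 2^a$ for $a, \ell \geq 1$), contradicting the hypothesis on $m$; when $p$ is odd, $m \geq p^n$ together with Landazuri--Seitz style cross-characteristic bounds on $T^\ell \preccurlyeq \GL_{2n}(p)$ gives an even stronger lower bound. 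When $\mathrm{char}\,k = 2$, if $p = 2$ then a Sylow 2-subgroup of $\tilde A$ has non-zero fixed space on $V$ and primitivity forces $A \leq \ker\lambda = 1$, a contradiction; if $p$ is odd the extraspecial analysis still produces an embedding violating known cross-characteristic bounds for $T^\ell$ in characteristic~$2$.

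The hard part will be the inductive step above: extracting a genuinely minimal extension of $T^{\ell - \ell'}$ from $C_H(A)$ and checking that the Clifford correspondent of $\lambda$ really is a faithful primitive projective representation of it, with the correct dimension accounting so that the bound $\ell \cdot 2^{n'_T}$ aggregates correctly across iterations. The rest of the argument follows standard Clifford-theoretic patterns as in \cite{ref:FeitTits78} and \cite{ref:KleidmanLiebeck89}, with Lemma~\ref{lem:multiplicative} being the key new ingredient that converts the irreducible symplectic embedding of $T^\ell$ into the desired bound.
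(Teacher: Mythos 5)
Your outline has two genuine gaps, and they sit exactly where the real work is. First, the step you yourself flag as ``the hard part'' is not a technicality that can be deferred: by working with a single minimal normal subgroup $A$ of $H$ you only obtain an irreducible embedding of $T^{\ell'} = H/C_H(A)$ into $\Sp_{2n}(p)$ with $\ell' \leq \ell$, and the proposed induction -- passing to $C_H(A)$, taking a Clifford correspondent, and claiming it is a faithful \emph{primitive} projective representation of a \emph{minimal} extension of $T^{\ell-\ell'}$ with the right dimension bookkeeping (the dimensions multiply, the target bound $\ell\cdot 2^{n'_T}$ is additive in $\ell$) -- is precisely what needs proof and is where a naive argument fails. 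The paper avoids this altogether: it runs a minimal-counterexample argument on $m$, uses Clifford's theorem plus primitivity to get a tensor decomposition $\lambda = \lambda_1\otimes\lambda_2$, and controls the two factors with the subdirect product lemma (Lemma~\ref{lem:subdirect}) so that the hypotheses of Proposition~\ref{prop:feit-tits} can be verified for the \emph{whole} kernel $\widetilde{N}$ at once; this yields a faithful irreducible embedding of the full $G=T^\ell$ into $\Sp_{2n}(r)$, after which Lemma~\ref{lem:multiplicative} applies. Without something playing the role of Lemma~\ref{lem:subdirect} (or an equivalent aggregation device), your induction does not close.

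Second, your treatment of odd $p$ and of $\mathrm{char}\,k=2$ is wrong in principle. The invariant $n'_T$ is defined via representations over $\F_2$, so from $T^{\ell}\preccurlyeq \Sp_{2n}(p)$ with $p$ odd and $m\geq p^n$ you cannot extract the inequality $m \geq \ell\cdot 2^{n'_T}$ by ``Landazuri--Seitz style cross-characteristic bounds'': there is no general comparison between the minimal symplectic $\F_p$-dimension of $T$ and its minimal $\F_2$-dimension in the direction you need, and indeed the theorem imposes no dimension hypothesis at all when $\mathrm{char}\,k=2$, so no bound of this kind can be the reason the odd-$r$ case is impossible. The correct mechanism (used by Feit--Tits and by the paper) is structural: when $r\neq 2$ the extension $1\to I\to A\to \Sp_{2n}(r)\to 1$ of Lemma~\ref{lem:e_extension} splits, so $H$ would be a split extension of $G$, contradicting that $\gamma$ is a proper minimal extension; this single observation disposes of all odd $r$, and in particular of the whole case $\mathrm{char}\,k=2$, leaving only $r=2$, where $m=2^n\geq 2^{n'_G}$ and Lemma~\ref{lem:multiplicative} gives the contradiction with $m<\ell\cdot 2^{n'_T}$.
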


The following corollary is immediate from the theorem.

\begin{shcorollary} \label{cor:primitive}
Let $G = T^\ell$ for a finite nonabelian simple group $T$ and a positive integer $\ell$. Let $\gamma\: H \to G$ be a finite proper minimal extension.
\begin{enumerate}
\item If $\mathrm{char}\, k = 2$, then $H$ has no faithful irreducible primitive projective representations.
\item  If $\mathrm{char}\, k \neq 2$, then every faithful primitive projective representation of $H$ has dimension at least $\ell \cdot 2^{n'_T}$.
\end{enumerate} 
\end{shcorollary}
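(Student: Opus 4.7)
The plan is to deduce both parts of Corollary~\ref{cor:primitive} directly from Theorem~\ref{thm:primitive} by contraposition; the author already flags the corollary as immediate, so there should be no substantive obstacle here. The only thing to check is that the hypotheses of the corollary really do activate the theorem in each case and that the conclusion contradicts the assumption that $\gamma$ is proper.

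For part (i), assume $\mathrm{char}\,k = 2$ and suppose, for contradiction, that there exists a faithful irreducible primitive projective representation $\lambda\: H \to \PGL_m(k)$. Since in characteristic~$2$ Theorem~\ref{thm:primitive} has no dimension restriction, its hypotheses are satisfied (a faithful irreducible primitive representation is in particular a faithful primitive one), and the theorem forces $\gamma$ to be an isomorphism. This contradicts the assumption that $\gamma$ is proper, so no such $\lambda$ can exist.

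For part (ii), assume $\mathrm{char}\,k \neq 2$ and let $\lambda\: H \to \PGL_m(k)$ be a faithful primitive projective representation. If $m < \ell \cdot 2^{n'_T}$, then the full hypotheses of Theorem~\ref{thm:primitive} are in place, so the theorem again yields that $\gamma$ is an isomorphism, contradicting properness of $\gamma$. Hence every faithful primitive projective representation of $H$ must have dimension at least $\ell \cdot 2^{n'_T}$, as claimed. The entire content of the corollary is thus a formal rephrasing of Theorem~\ref{thm:primitive}; all the real work sits in the theorem, which is why no genuine difficulty arises at this step.
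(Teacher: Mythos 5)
Your contrapositive deduction is exactly the intended argument: the paper simply declares the corollary immediate from Theorem~\ref{thm:primitive}, and your case-by-case check (applying the theorem to contradict properness of $\gamma$) is the correct and same route. No issues.
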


We expect these results to have applications. Indeed, Theorem~\ref{thm:primitive} has already been applied in a recent paper of Ellis and Harper \cite{ref:EllisHarper}.

\begin{acknowledgements*}
The first author is an EPSRC Postdoctoral Fellow (EP/X011879/1). In order to meet institutional and research funder open access requirements, any accepted manuscript arising shall be open access under a Creative Commons Attribution (CC BY) reuse licence with zero embargo.
\end{acknowledgements*}

\section{Preliminaries} \label{s:prelims}

\subsection{\boldmath Symplectic-type $r$-groups} \label{ss:p_r-groups}

We use this first preliminary section to collect together some key results from \cite[Section~2]{ref:FeitTits78}.

Let $V = \F_r^d$ where $r$ is prime and let $f$ be an alternating form on $V$. If $r=2$, then let $Q$ be a quadratic form on $V$ with bilinear form $f$. Write $\dim (V/\mathrm{rad}\, V) = 2n$. As noted in \cite[(2.1)]{ref:FeitTits78}, up to isomorphism, there exists a unique central extension of groups
\[
0 \to \F_r \to R \xrightarrow{\pi} V \to 0
\] 
such that for all $x, y \in R$, $[x,y] = f(x\pi,y\pi)$ and $x^r = 1$ if $r \neq 2$ and $x^r = Q(x\pi)$ if $r=2$. Now assume that $Z(R)$ is cyclic, which means that 
\begin{enumerate}[1.]
\item if $r \neq 2$, then $f$ is nondegenerate and the isometry group of $f$ is $X = \Sp_{2n}(r)$
\item if $r = 2$, then $Q$ is nondegenerate and the isometry group $X$ of $Q$ is as follows
\begin{enumerate}[(a)]
\item $f$ is nondegenerate and $X = \Or^\e_{2n}(2)$ where $\e \in \{+,-\}$ is the sign of $Q$
\item $f$ has defect $1$ and $X = \Or_{2n+1}(2) \cong \Sp_{2n}(2)$.
\end{enumerate}
\end{enumerate}
According to these cases, we will denote $R$ by 
\begin{equation} \label{eq:e}
\text{(1)}\quad r^{1+2n} \qquad \text{(2a)}\quad 2^{1+2n}_\pm \qquad \text{(2b)}\quad 4 \circ 2^{1+2n}.
\end{equation}
These groups have a straightforward characterisation \cite[Lemma~2.6]{ref:FeitTits78}.

\begin{lemma} \label{lem:e_char}
Let $r$ be prime and let $R$ be a nonabelian $r$-group all of whose proper characteristic subgroups are cyclic and central. Then $R$ is isomorphic to a group in \eqref{eq:e} for some $n$.
\end{lemma}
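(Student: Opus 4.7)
The plan is to extract the symplectic-type structure on $R$ from the characteristic-subgroup hypothesis in three stages, and then identify $R$ up to isomorphism.

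First, since $R$ is nonabelian, the subgroups $Z(R)$, $[R,R]$, and $\Phi(R)$ are all proper and nontrivial, so by hypothesis each is cyclic and central. In particular $[R,R] \leq Z(R)$, so $R$ has nilpotency class $2$, and $R^r \leq \Phi(R) \leq Z(R)$, so $r$-th powers are central.

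Second, I would show $|[R,R]| = r$. For any $x, y \in R$, $x^r \in Z(R)$ gives $1 = [x^r, y]$, while the class-$2$ identity $[x^r, y] = [x,y]^r$ forces every commutator to have order dividing $r$. So the cyclic group $[R,R]$ has order $r$, and the commutator induces a well-defined $\mathbb{F}_r$-bilinear alternating form $f \colon R/Z(R) \times R/Z(R) \to [R,R] \cong \mathbb{F}_r$ whose radical equals $Z(R)/Z(R) = 0$. Hence $f$ is nondegenerate and $R/Z(R) \cong \mathbb{F}_r^{2n}$ for some $n \geq 1$.

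Third, I would bound $|Z(R)|$ via a power-map argument. For $r$ odd, $(xy)^r = x^r y^r \cdot [y,x]^{r(r-1)/2} = x^r y^r$ (since $r \mid r(r-1)/2$ and $[R,R]$ has exponent $r$), so the $r$-power map is a homomorphism with characteristic kernel $\Omega_1(R)$; if $\Omega_1(R) < R$ then $\Omega_1(R)$ would be cyclic and central and thus equal $\Omega_1(Z(R))$ of order $r$, giving $|R^r| = |R|/r \leq |Z(R)|$, which is incompatible with $|R| = |Z(R)| \cdot r^{2n}$ and $n \geq 1$. Hence $R$ has exponent $r$, $|Z(R)| = r = |[R,R]|$, and $R \cong r^{1+2n}$. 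For $r = 2$, squaring only obeys $(xy)^2 = x^2 y^2 [y,x]$, but $\sigma \colon R \to Z(R)/[R,R]$, $x \mapsto x^2 [R,R]$, is a homomorphism with characteristic kernel $K$; an analogous dichotomy --- $K = R$ forces $R^2 \leq [R,R]$, and since $Z(R)^2 \leq R^2$ this gives $|Z(R)| \leq 4$; $K < R$ forces $K$ cyclic and central with $K = \Omega_2(Z(R))$, and the index estimate $|R|/|K| \leq |Z(R)|/2$ contradicts $|R| = |Z(R)| \cdot 2^{2n}$ --- yields $|Z(R)| \in \{2, 4\}$, corresponding to $R \cong 2^{1+2n}_\pm$ and $R \cong 4 \circ 2^{1+2n}$. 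In the former case the $\pm$ is distinguished by the type of the $\mathbb{F}_2$-quadratic form $Q(\bar x) = x^2$ on $R/Z(R)$, whose polarization is $f$.

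The main obstacle is the final step: once the symplectic form on $R/Z(R)$ is in place, bounding $|Z(R)|$ hinges on building a natural characteristic subgroup (the kernel of an appropriate power map) and squeezing a numerical contradiction from its enforced cyclic-central nature. The case $r = 2$ is the most delicate point, since squaring is only a homomorphism modulo $[R,R]$ and two families of groups arise.
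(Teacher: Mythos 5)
Your argument is correct, but it is genuinely different from what the paper does: the paper offers no proof of this lemma at all, simply quoting it as \cite[Lemma~2.6]{ref:FeitTits78}, whereas you reprove it from scratch via the standard symplectic-type analysis (characteristic subgroups $Z(R)$, $[R,R]$, $\Phi(R)$ give class $2$ with central $r$-th powers; $|[R,R]|=r$ and a nondegenerate alternating form on $R/Z(R)\cong\F_r^{2n}$; then power-map kernels, which are characteristic, force either exponent $r$ (odd $r$) or $R^2\leq[R,R]$ and $|Z(R)|\in\{2,4\}$ ($r=2$) by your index counts, all of which I checked). What the citation buys the paper is brevity; what your route buys is a self-contained proof that makes visible exactly where both cyclicity \emph{and} centrality of proper characteristic subgroups are used. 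The only place you should add a couple of lines is the very last identification step, which you assert rather than prove: in the case $r=2$, $|Z(R)|=4$ you need to see that the data ($Z(R)\cong Z_4$, $[R,R]=\Omega_1(Z(R))$, $R^2\leq[R,R]$, $R/Z(R)$ elementary abelian with nondegenerate commutator form) really forces $R\cong 4\circ 2^{1+2n}$; this follows by lifting a symplectic basis of $R/Z(R)$ to generators of a subgroup $E$ with $E\cap Z(R)=[R,R]$ and $R=E\,Z(R)$, so that $E$ is extraspecial of order $2^{1+2n}$ and $R=Z_4\circ E$, together with the fact that $Z_4\circ 2^{1+2n}_+\cong Z_4\circ 2^{1+2n}_-$, so the central product is unique. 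Similarly, your identifications in the other cases implicitly use the (standard, but worth citing) uniqueness of the extraspecial group of exponent $r$ and order $r^{1+2n}$ for odd $r$, and the classification of extraspecial $2$-groups by the type of the quadratic form $\bar x\mapsto x^2$; with those uniqueness statements invoked, and a remark that these abstract groups coincide with the groups presented in \eqref{eq:e}, your proof is complete.
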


Let $R$ be a group in \eqref{eq:e}. Write $I = \Inn(R)$ and $A = C_{\Aut(R)}(Z(R))$. Continue to write $X$ for the associated isometry group. The following is recorded in \cite[(2.2) \& (2.3)]{ref:FeitTits78}.

\begin{lemma} \label{lem:e_extension}
There is a short exact sequence $1 \to I \to A \to X \to 1$, which splits if $r \neq 2$.
\end{lemma}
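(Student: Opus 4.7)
The plan is to define $\phi\: A \to X$ via the induced action on $V = R/\ker\pi$, identify $\ker\phi$ with $\Inn(R)$, establish surjectivity by lifting isometries, and in the case $r \neq 2$ produce an explicit section via a Heisenberg realisation of $R$.

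First, any $\alpha \in A$ fixes $\ker\pi \leq Z(R)$ pointwise, so descends to some $\bar\alpha \in \GL(V)$. The relation $[x,y] = f(x\pi,y\pi) \in \ker\pi$ is fixed by $\alpha$, forcing $\bar\alpha$ to preserve $f$; if $r = 2$, the relation $x^2 = Q(x\pi)$ similarly forces $\bar\alpha$ to preserve $Q$. Hence $\bar\alpha \in X$ and the assignment $\alpha \mapsto \bar\alpha$ is a homomorphism $\phi\: A \to X$. Conjugations plainly lie in $A$, and the inclusion $\Inn(R) \leq \ker\phi$ is immediate from $[R,R] \leq \ker\pi$. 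Conversely, if $\alpha \in \ker\phi$ then $\alpha(x) = x\,c(x)$ for some $c\: R \to \ker\pi$, and $\alpha(xy)=\alpha(x)\alpha(y)$ together with centrality of the $c$-values forces $c$ to be a homomorphism. Since $\alpha$ centralises all of $Z(R)$, the map $c$ vanishes on $Z(R)$, hence descends to a linear functional $V \to \F_r$ that annihilates $Z(R)/\ker\pi$ -- trivial in cases 1 and 2a, and equal to $\mathrm{rad}\,V$ in case 2b. The functionals on $V$ vanishing on $\mathrm{rad}\,V$ are exactly those of the form $u \mapsto f(u,v)$ with $v \in V$, and each such $c$ coincides with that induced by the inner automorphism $\alpha_y$ with $y\pi = v$; hence $\alpha \in \Inn(R)$.

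For surjectivity when $r$ is odd, I would use the Heisenberg realisation $R = V \oplus \F_r$ with multiplication $(u,s)(v,t) = (u+v,\, s+t+\tfrac{1}{2}f(u,v))$, which is well-defined since $2$ is invertible in $\F_r$ and reproduces the correct commutator and trivial $r$th powers. The prescription $g\cdot(v,t) = (gv,t)$ is immediately seen to be an automorphism of $R$ for each $g \in X = \Sp_{2n}(r)$ (using $f(gu,gv) = f(u,v)$), and fixes $\ker\pi$ pointwise. Thus $g \mapsto (g\cdot)$ is a section $X \to A$, giving simultaneously surjectivity of $\phi$ and the splitting. For $r = 2$, surjectivity is obtained directly: fix a basis $v_1,\ldots,v_d$ of $V$ (incorporating a generator of $\mathrm{rad}\,V$ in case 2b), lift to generators $x_i \in R$, and define $\alpha_g$ on each $x_i$ as any lift of $gv_i$; that $g$ is an isometry makes this consistent with the defining relations $[x_i,x_j] = f(v_i,v_j)$ and $x_i^2 = Q(v_i)$.

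The most delicate point will be the kernel analysis in case 2b, where using centralisation of the full centre $Z(R) \cong C_4$ (rather than merely of $\ker\pi \cong \F_2$) is precisely what pins $c$ down to vanish on $\mathrm{rad}\,V$ and hence to come from an inner automorphism; dropping this would admit spurious non-inner elements of $\ker\phi$. The lemma is silent on splitting when $r=2$, which is consistent with the failure of the Heisenberg construction in characteristic $2$.
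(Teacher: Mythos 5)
Your argument is necessarily a different route from the paper's, because the paper gives no proof of this lemma at all: it is quoted from Feit--Tits ((2.2) and (2.3) of that paper). Judged on its own terms, your self-contained proof has the right structure and is mostly correct: the induced map $\bar\alpha\in\GL(V)$ is well defined because $\ker\pi=[R,R]$ is characteristic and fixed pointwise by $A$; preservation of $f$ (and of $Q$ when $r=2$) follows as you say; the kernel computation is right, including the genuinely delicate point that in case (2b) one must use centralisation of the full centre $Z(R)\cong C_4$ to force the functional to vanish on $\mathrm{rad}\,V$, hence to be of the form $f(\cdot,v)$ and so realised by an inner automorphism; and for odd $r$ the Heisenberg model $(u,s)(v,t)=(u+v,\,s+t+\tfrac12 f(u,v))$, identified with $R$ via the uniqueness statement (2.1), does give surjectivity and an explicit splitting simultaneously.

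The one step that fails as written is the surjectivity argument for $r=2$ in case (2b). Take $v_1$ spanning $\mathrm{rad}\,V$ and $x_1$ a lift, so $x_1$ generates $Z(R)\cong C_4$ and $x_1^2=\epsilon$, the generator of $\ker\pi$. Since $g$ fixes the one-dimensional radical, $gv_1=v_1$, and ``any lift of $gv_1$'' permits the choice $x_1\epsilon=x_1^{-1}$; the resulting automorphism then fixes $\ker\pi$ pointwise but inverts $Z(R)$, so it lies in $C_{\Aut(R)}(\ker\pi)$ but not in $A=C_{\Aut(R)}(Z(R))$ --- precisely the pitfall you identified on the kernel side, recurring here. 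The repair is immediate: insist that $x_1\mapsto x_1$ (arbitrary lifts are fine for the other basis vectors, and in cases (1) and (2a) where $Z(R)=\ker\pi$ there is no issue). You should also justify why prescribing the images of $x_1,\dots,x_d$ and $\epsilon$ defines an endomorphism at all: for instance, note that the relations ``$\epsilon^2=1$, $\epsilon$ central, $[x_i,x_j]=\epsilon^{f(v_i,v_j)}$, $x_i^2=\epsilon^{Q(v_i)}$'' present a group of order at most $2^{d+1}=|R|$, hence present $R$, and the images generate $R$, so the endomorphism is an automorphism; alternatively deduce surjectivity of $A\to X$ from the uniqueness of the central extension applied to $(R,\pi)$ and $(R,\pi g)$, again correcting by a suitable ``scalar'' automorphism $x\mapsto x\epsilon^{c(x\pi)}$ in case (2b) to land in $A$. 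With these adjustments the proof is complete.
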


The following is \cite[(2.4) \& Proposition~2.5]{ref:FeitTits78}.

\begin{lemma} \label{lem:e_reps}
Let $k$ be an algebraically closed field with $\mathrm{char}\, k \neq r$. Let $\tau\:R \to \GL_m(k)$ be a faithful irreducible representation. Then $m=r^n$ and $N_{\GL_m(k)}(R\tau)$ is an extension of $A$ by $Z(\GL_m(k))$.
\end{lemma}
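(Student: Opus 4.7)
The plan is to follow the standard representation theory of extraspecial/symplectic-type $r$-groups. First I would analyse $\tau$ on the centre: since $\tau$ is irreducible and $Z(R)$ is central, Schur's lemma forces $\tau(Z(R))$ to consist of scalar matrices, and faithfulness of $\tau$ (combined with $Z(R)$ cyclic) means these scalars generate a cyclic group of order $|Z(R)|$. In particular, $\tau$ has a faithful central character $\lambda \: Z(R) \to k^\times$.

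Next I would compute $m$ by a Mackey/Clifford construction. Let $W$ be a maximal totally isotropic subspace of the nondegenerate (symplectic or quadratic) space $V = R/\mathrm{rad}$, and let $U$ be its preimage in $R$. Then $U$ is a maximal abelian subgroup of $R$ containing $Z(R)$, with $[R:U] = r^n$, and $\lambda$ extends to a linear character $\mu$ of $U$ (using the form data to handle the $r=2$ cases). Induction gives $\mathrm{Ind}_U^R \mu$, an irreducible representation of degree $r^n$ with central character $\lambda$. A counting argument (sum of squares, or the observation that irreducibles of $R$ with fixed nontrivial central character are unique up to isomorphism because $R/Z(R)$ is abelian) shows that $\tau \cong \mathrm{Ind}_U^R \mu$ and hence $m = r^n$.

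For the normalizer, conjugation in $\GL_m(k)$ induces a homomorphism $\Phi \: N_{\GL_m(k)}(R\tau) \to \Aut(R\tau) \cong \Aut(R)$. The image lies in $A = C_{\Aut(R)}(Z(R))$ because $Z(R)\tau \subseteq Z(\GL_m(k))$ is fixed pointwise under any conjugation in $\GL_m(k)$. The kernel of $\Phi$ is $C_{\GL_m(k)}(R\tau)$, which equals $Z(\GL_m(k))$ by Schur's lemma, since $\tau$ is irreducible. Finally, surjectivity onto $A$ follows from the uniqueness-up-to-isomorphism statement above: for any $\alpha \in A$, the twisted representation $\tau \circ \alpha$ has central character $\lambda \circ \alpha|_{Z(R)} = \lambda$, so is equivalent to $\tau$, and any intertwiner $g \in \GL_m(k)$ realises $\alpha$.

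The main obstacle I anticipate is the case distinction for $r = 2$, where $Q$ (not just $f$) governs the group law and $\lambda$ is not quite free to extend to $U$ without using the quadratic form data; here one must be careful to pick the extension $\mu$ compatibly with $Q$ so that the induced representation is genuinely faithful on $R$ (rather than on a quotient). Apart from this bookkeeping, every step reduces to Schur's lemma plus the elementary representation theory of abelian-by-cyclic-centre $r$-groups, which is why it is reasonable to simply invoke \cite[(2.4) \& Proposition~2.5]{ref:FeitTits78}.
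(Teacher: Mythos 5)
The paper gives no internal proof of this lemma: it is simply recorded from Feit--Tits \cite[(2.4) \& Proposition~2.5]{ref:FeitTits78}, so your sketch is supplying the standard argument rather than competing with one, and it is essentially correct. Schur's lemma gives a faithful scalar action of $Z(R)$, hence a faithful central character $\lambda$; inducing an extension $\mu$ of $\lambda$ from the (abelian, normal) preimage $U$ of a maximal totally $f$-isotropic subspace gives an irreducible of degree $[R:U]=r^n$ over $\lambda$; the count $\sum_{\chi\in\mathrm{Irr}(R\mid\lambda)}\chi(1)^2=|R:Z(R)|=r^{2n}$ then forces uniqueness, so $m=r^n$; and your normaliser computation (kernel $=Z(\GL_m(k))$ by Schur, image inside $A$ because conjugation fixes the scalars $Z(R)\tau$ pointwise, surjectivity by applying uniqueness to $\tau\circ\alpha$ for $\alpha\in A$) is exactly the right three-step argument. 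Two small repairs: uniqueness holds for a fixed \emph{faithful} central character, not merely a nontrivial one --- in case (2b), where $Z(R)\cong Z_4$, the nontrivial character with kernel $[R,R]$ lies under $2^{2n}$ linear characters --- and the reason is not just that $R/Z(R)$ is abelian, but that the commutator pairing on $R/Z(R)$ read through the faithful $\lambda$ is nondegenerate (equivalently, the sum-of-squares count you mention). Finally, the obstacle you anticipate for $r=2$ is illusory: $U$ is abelian regardless of what $Q$ does on $W$, so $\lambda$ extends to $U$ with no compatibility condition, and faithfulness of the induced representation is automatic, since its kernel meets $Z(R)$ trivially while every nontrivial normal subgroup of $R$ meets $Z(R)$ nontrivially; the quadratic form matters only for pinning down the isomorphism type of $R$ and the isometry group $X$, hence $A$, not for the representation-theoretic steps.
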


The following is extracted from \cite[(3.4)]{ref:FeitTits78}.

\begin{proposition} \label{prop:feit-tits}
Let $k$ be an algebraically closed field, let $m \geq 2$ be an integer, let $H \leq \GL_m(k)$ and let $N$ be a normal subgroup of $H$ such that
\begin{enumerate}
\item $N$ is nilpotent
\item $N$ contains the subgroup of $Z(\GL_m(k))$ of order $4$ if $\mathrm{char}\, k \neq 2$
\item $N$ is an irreducible subgroup of $\GL_m(k)$
\item for all $M \leq N$ such that $M \leqn H$, either $M$ is an irreducible subgroup of $\GL_m(k)$ or $M$ is a cyclic subgroup of $Z(H)$.
\end{enumerate}
Then there exists a prime $r \neq \mathrm{char}\, k$ and a positive integer $n$ such that $m=r^n$ and there exists a faithful irreducible representation $H/N \to \Sp_{2n}(r)$. 
\end{proposition}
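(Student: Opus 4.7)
The plan is to follow the argument of Feit and Tits in \cite[(3.4)]{ref:FeitTits78}, building systematically on the structural results in Lemmas~\ref{lem:e_char}--\ref{lem:e_reps}.

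First I would exploit the nilpotency of $N$ to write $N = \prod_p N_p$ with $N_p$ the Sylow $p$-subgroup, which is characteristic in $N$ and hence normal in $H$. Condition~(4) then forces each $N_p$ to be either irreducible on $V=k^m$ or cyclic and central in $H$. I would next establish that exactly one prime $r$ satisfies ``$N_r$ irreducible'', and that this $r$ differs from $\mathrm{char}\,k$. At least one such $r$ exists, for otherwise $N \leq Z(\GL_m(k)) = k^\times$, contradicting (3) when $m \geq 2$. At most one exists by Schur: if primes $p \neq q$ both gave irreducible $N_p, N_q$, they would commute as Sylow subgroups of a nilpotent group, so $N_q \leq C_{\GL_m(k)}(N_p) = k^\times$, making $N_q$ abelian and hence reducible for $m \geq 2$. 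And if $r = \mathrm{char}\,k$ then $N_r$ would be a finite $r$-group in characteristic $r$, hence unipotent, stabilising a full flag and again not irreducible.

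Next I would choose $R \leq N_r$ minimal among subgroups of $N$ normal in $H$ and irreducible on $V$, additionally insisting (when $\mathrm{char}\,k \neq 2$) that $R$ contain the order-$4$ central subgroup provided by~(2). Every proper characteristic subgroup $M$ of $R$ is normal in $H$ and contained in $N$, so by~(4) either $M$ is irreducible (ruled out by minimality of $R$) or cyclic central in $H$; hence every proper characteristic subgroup of $R$ is cyclic and central in $R$. Since an abelian irreducible subgroup is impossible for $m \geq 2$, the group $R$ is also nonabelian, and Lemma~\ref{lem:e_char} identifies $R$ with one of the groups in \eqref{eq:e}. Lemma~\ref{lem:e_reps} then yields $m = r^n$; when $r=2$ and $\mathrm{char}\,k \neq 2$, condition~(2) forces $|Z(R)| = 4$, so $R$ is of type $4 \circ 2^{1+2n}$.

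To conclude I would construct $\pi\: H \to \Sp_{2n}(r)$ and verify it is faithful and irreducible modulo $N$. Conjugation gives $\phi\: H \to \Aut(R)$; as $Z(R) \leq Z(\GL_m(k))$ is fixed pointwise, the image lies in $A = C_{\Aut(R)}(Z(R))$. Composing with the surjection $A \to X$ of Lemma~\ref{lem:e_extension} and using $X = \Sp_{2n}(r)$ in case~(1), $X = \Or^\e_{2n}(2) \leq \Sp_{2n}(2)$ in case~(2a), and $X \cong \Sp_{2n}(2)$ in case~(2b) yields $\pi$. By the description of the kernel $\Inn(R)$ of $A \to X$ and Schur's calculation $C_H(R) = H \cap k^\times$, one finds $\ker \pi = R \cdot (H \cap k^\times)$. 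To identify this with $N$ I would use $N_p \leq k^\times$ for $p \neq r$ together with the equality $N_r = R \cdot Z(N_r)$, which follows from applying~(4) and the minimality of $R$ to the characteristic subgroup $R\,Z(N_r)$ of $N_r$; condition~(2) is what places the relevant scalars of $H$ inside $N$ in odd characteristic. Finally, irreducibility of the image on $R/Z(R) \cong \F_r^{2n}$ follows once more from the minimality of $R$: any $H$-invariant $R_0$ with $Z(R) \lneq R_0 \lneq R$ would, by~(4), be either irreducible on $V$ (contradicting minimality) or cyclic central (forcing $R_0 = Z(R)$). The main obstacle will be the precise kernel calculation $\ker\pi = N$: untangling which scalars of $H$ lie in $N$ requires both the extraspecial structure of~$R$ and a careful use of condition~(2).
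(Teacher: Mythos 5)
Your skeleton is the paper's: choose a minimal normal subgroup of $H$ inside $N$ (the paper takes a minimal \emph{noncentral} one, $E_0$, rather than a minimal irreducible one), adjoin the central $Z_4$ to force type $4\circ 2^{1+2n}$, invoke Lemmas~\ref{lem:e_char} and~\ref{lem:e_reps} to get $m=r^n$, map $H$ to $X\cong \Sp_{2n}(r)$ via Lemma~\ref{lem:e_extension}, and get irreducibility of the image from minimality of $R$ together with hypothesis (4). (A small slip: $Z_4$ should be adjoined when $r=2$, not whenever $\mathrm{char}\,k\neq 2$; for odd $r$ in odd or zero characteristic there is no $R\leq N_r$ containing $Z_4$ at all.) The genuine gap is exactly where you say the main obstacle lies, the identification $\ker\pi=N$, and your sketched route to it does not work. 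For the inclusion $N\leq\ker\pi$ you propose $N_r=R\,Z(N_r)$ ``by applying (4) and the minimality of $R$ to the characteristic subgroup $R\,Z(N_r)$ of $N_r$''; but $R\,Z(N_r)$ need not be characteristic in $N_r$, and in any case (4) and the minimality of $R$ only bound $H$-normal subgroups of $N$ from below (irreducible or cyclic central; irreducible ones cannot be smaller than $R$) --- they give no upper bound on $N_r$, which is what is needed. The paper's device here is the idea your plan is missing: having proved that the image of $H$ on $R/Z(R)\cong\F_r^{2n}$ is irreducible, it notes that the image of $N$ is then a \emph{normal $r$-subgroup of an irreducible subgroup of} $\GL_{2n}(r)$, hence trivial; since the Sylow $p$-subgroups of $N$ for $p\neq r$ centralise $R$, so that $N/C_N(R)$ is an $r$-group, this yields $N=R\,C_N(R)=R\,Z(N)$ in one stroke, with no structural analysis of $N_r$.

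The reverse inclusion $\ker\pi\leq N$, i.e.\ that every scalar of $H$ lies in $N$, is the other half of your ``main obstacle,'' and it will not come out of condition (2) as you hope: (2) only places the order-$4$ scalars inside $N$, whereas hypotheses (1)--(4) constrain only subgroups of $N$ and say nothing about central scalar subgroups of $H$ lying outside $N$. Indeed one can enlarge such an $H$ by a scalar of order coprime to $|N|$ without disturbing (1)--(4); the new element is central in $H$, lies outside $N$, and is killed by $\pi$, so the faithfulness of the induced map $H/N\to\Sp_{2n}(r)$ is not a formal consequence of the route you describe. This point is treated only implicitly in the paper as well (it is harmless in the paper's applications, where $H/N$ is a product of nonabelian simple groups, so every scalar of $H$, being central modulo $N$, is forced into $N$). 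In summary: same approach as the paper, but the kernel computation --- the step you explicitly defer --- is not closed by your plan; the containment $N\leq\ker\pi$ needs the normal-$r$-subgroup trick above, and the containment $\ker\pi\leq N$ needs an input beyond condition (2).
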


\begin{proof}
Let $E_0$ be a minimal noncentral normal subgroup of $H$ contained in $N$. Since $N$ is nilpotent, $E_0$ is a $r$-group for some prime $r$. If $\mathrm{char}\, k = p > 0$, then $N$ has no normal $p$-subgroup since $H$ is irreducible, but $N$ is nilpotent, which means that $N$ is a $p'$-group, so $r \neq p$. If $r \neq 2$, then let $E = E_0$, and if $r = 2$, then $E = E_0Z_4$, where $Z_4$ is the subgroup of $Z(\GL_m(k))$ of order $4$. Since $E$ is noncentral, the hypotheses in the statement ensure that $E$ is irreducible and hence $E$ is nonabelian. Note that every proper characteristic subgroup of $E$ is cyclic and central. Therefore, by Lemma~\ref{lem:e_char}, $E$ is isomorphic to a group in \eqref{eq:e} for some $n$. In fact, the choice of $E$ ensures that $E = r^{1+2n}$ if $r \neq 2$ and $E = 4 \circ 2^{1+2n}$ if $r = 2$. Since $E$ is irreducible, Lemma~\ref{lem:e_reps} implies that $m = r^n$. 

Let $Y = C_{N}(E)$. Since $E$ is irreducible, by Schur's Lemma, $Y \leq Z(\GL_m(k))$. In particular, $Y = Z(N)$. Let $I = \Inn(E)$ and $A = C_{\Aut(E)}(Z(E))$. Let $\widetilde{A} = N_{\GL_m(k)}(E)$, noting that, by Lemma~\ref{lem:e_reps}, we have $\widetilde{A}/Z(\GL_m(k)) = A$. In particular, we have 
\begin{gather*}
      EY \leq N  \leq H  \leq \widetilde{A} \leq \GL_m(k)
\end{gather*}
and $I = EY/Y \leq A \leq \PGL_m(k)$. Since $\widetilde{A}/EY \cong A/I \cong \Sp_{2n}(r)$, we obtain a faithful representation $\mu\: H/EY \to \Sp_{2n}(r)$. 

We claim that $(H/EY)\mu$ is irreducible. To see this, let $U$ be a proper subgroup of $EY/Y$ that is normalised by $H$. Noting that $EY/Y \cong E_0/(E_0 \cap Y)$, let $U_0$ be the corresponding subgroup of $E_0$, which is normalised by $H$. Since $E_0$ was chosen to be a minimal noncentral normal subgroup of $H$ contained in $N$, $U_0$ is central in $H$. In particular, $U$ is trivial.

We claim that $N = EY$. Since $N$ is nilpotent, $N/Y$ is an $r$-group, so $N/EY$ is an $r$-group too. However, since $(H/EY)\mu$ is irreducible $H/EY$ has no nontrivial normal $r$-subgroup, so $N = EY$. Therefore, $\mu\: H/N \to \Sp_{2n}(r)$ is a faithful irreducible representation, as required.
\end{proof}

\subsection{Subdirect products} \label{ss:p_subdirect}

We conclude this preliminary section with a technical lemma regarding subdirect products.

\begin{lemma} \label{lem:subdirect}
Let $H$ be a subdirect product of $H_1 \times H_2$ with projections $\pi_1\: H \to H_1$ and $\pi_2\: H \to H_2$. Assume that $N$ is a soluble normal subgroup of $H$ such that $H/N \cong T^\ell$ where $T$ is a nonabelian simple group and $\ell$ is a positive integer. Then there exist integers $\ell_1, \ell_2 \geq 0$ satisfying $\ell_1 + \ell_2 \geq \ell$ such that $H_1/N\pi_1 \cong T^{\ell_1}$ and $H_2/N\pi_2 \cong T^{\ell_2}$.
\end{lemma}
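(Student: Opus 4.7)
The plan is to exploit the fact that in a subdirect product the two projection kernels commute. Writing $K_i = \ker\pi_i$, the definition of a subdirect product gives $K_1 \cap K_2 = 1$, and since $K_1, K_2$ are normal subgroups of $H$ with trivial intersection we obtain $[K_1, K_2] = 1$.

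First I would identify $H_i/N\pi_i$ with $H/(NK_i)$. This is immediate from the isomorphism $H/K_i \cong H_i$ induced by $\pi_i$, under which $N\pi_i$ corresponds to $NK_i/K_i$. Next I observe that $H/(NK_i)$ is a quotient of $H/N \cong T^\ell$. Because $T$ is nonabelian simple, every normal subgroup of $T^\ell$ has the form $\prod_{j \in A} T_j$ for some $A \subseteq \{1, \dots, \ell\}$, so I may write $NK_i/N$ as the subproduct of $T^\ell$ corresponding to some index set $A_i$; setting $\ell_i = \ell - |A_i|$ then gives $H_i/N\pi_i \cong H/(NK_i) \cong T^{\ell_i}$.

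The only substantive step is the inequality $\ell_1 + \ell_2 \geq \ell$. Passing to $H/N \cong T^\ell$, we have $[K_1N/N, K_2N/N] = [K_1, K_2]N/N = 1$, so $K_1N/N$ and $K_2N/N$ are commuting normal subgroups of $T^\ell$. Since $T$ is perfect, two subproducts $T^{A_1}$ and $T^{A_2}$ commute if and only if $A_1 \cap A_2 = \emptyset$. Therefore $|A_1| + |A_2| \leq \ell$, which rearranges to $\ell_1 + \ell_2 \geq \ell$.

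I do not anticipate a serious obstacle: the argument is purely structural, hinging on the observation that the projection kernels commute together with the description of the normal subgroups of $T^\ell$ as subproducts. Incidentally, the solubility of $N$ does not appear to be used in this proof; it is presumably imposed because in the intended applications $N$ will be the soluble radical of $H$.
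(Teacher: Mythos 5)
Your proof is correct, and it takes a genuinely different route from the paper's. The paper also reduces to showing $m_1+m_2\leq\ell$ for $NK_i/N\cong T^{m_i}$, but it gets there via soluble radicals: it uses the solubility of $N$ to identify $K_i\cap N$ as the soluble radical of $K_i$, deduces from $K_1K_2\cong K_1\times K_2$ that $K_1K_2\cap N=(K_1\cap N)(K_2\cap N)$, and then embeds $T^{m_1}\times T^{m_2}\cong K_1K_2/(K_1K_2\cap N)$ into $H/N\cong T^\ell$. Your argument replaces all of this with the observation that $[K_1,K_2]\leq K_1\cap K_2=1$, so the images $K_1N/N$ and $K_2N/N$ are commuting normal subgroups of $T^\ell$, hence subproducts with disjoint supports, giving $m_1+m_2\leq\ell$ at once; each step you use (normal subgroups of $T^\ell$ are subproducts, commuting subproducts have disjoint index sets since $T$ is perfect, $[\overline{K_1},\overline{K_2}]=\overline{[K_1,K_2]}$) is standard and correctly applied. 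What your approach buys is both brevity and generality: as you note, the solubility of $N$ is never used, so your argument proves the lemma for an arbitrary normal subgroup $N$ with $H/N\cong T^\ell$, whereas the paper's proof genuinely relies on solubility (it is what forces $K_1K_2\cap N=(K_1\cap N)(K_2\cap N)$); the hypothesis is retained in the paper presumably because it is harmless in the intended application (Corollary 2.7 and the proof of Theorem 2, where $N$ is indeed soluble) and matches the radical-based bookkeeping there.
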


\begin{proof}
Let $i \in \{1,2\}$. Write $K_i = \ker\pi_i$. Since $NK_i/N \leqn H/N \cong T^\ell$, write $NK_i/N \cong T^{m_i}$ for some $m_i \leq \ell$. Now 
\[
H_i/N\pi_i \cong (H/K_i)/(NK_i/K_i) \cong H/NK_i \cong (H/N)/(NK_i/N) \cong T^{\ell-m_i},
\]
since $H/N \cong T^\ell$ and $NK_i/N \cong T^{m_i}$. Write $\ell_i = \ell-m_i$. We claim that $m_1+m_2 \leq \ell$, which implies that $\ell_1+\ell_2 \geq \ell$, as required.

To prove the claim, first note that
\[
K_i/(K_i \cap N) \cong NK_i/N \cong T^{m_i}
\] 
and $K_i \cap N$ is soluble, so $K_i \cap N$ is the soluble radical of $K_i$.  Since $K_1 \cap K_2 = 1$, we have $K_1K_2 \cong K_1 \times K_2$, which implies that $(K_1 \cap N)(K_2 \cap N)$ is the soluble radical of $K_1K_2$. Since
\[
K_1K_2/(K_1K_2 \cap N) \cong K_1K_2N/N \leq H/N \cong T^\ell
\] 
write $K_1K_2/(K_1K_2 \cap N) \cong T^m$ for some $m \leq \ell$. Since $K_1K_2 \cap N$ is soluble, $K_1K_2 \cap N$ is the soluble radical of $K_1K_2$, so $K_1K_2 \cap N = (K_1 \cap N)(K_2 \cap N)$. Therefore, 
\[
T^m \cong K_1K_2/(K_1K_2 \cap N) \cong K_1/(K_1 \cap N) \times K_2/(K_2 \cap N) \cong T^{m_1} \times T^{m_2},
\]
so $m_1 + m_2 = m \leq \ell$, as claimed.  
\end{proof}

\begin{corollary} \label{cor:subdirect}
Let $H$ be a subdirect product of $H_1 \times \cdots \times H_r$ and let $\pi_i\: H \to H_i$ be the projection onto the $i$th factor. Assume that $N$ is a soluble normal subgroup of $H$ such that $H/N \cong T^\ell$ where $T$ is a nonabelian simple group and $\ell$ is a positive integer. Then there exist nonnegative integers $\ell_1, \dots, \ell_r$ satisfying $\ell_1 + \cdots + \ell_r \geq \ell$ such that $H_i/N\pi_i \cong T^{\ell_i}$ for all $1 \leq i \leq r$.
\end{corollary}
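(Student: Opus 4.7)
The natural approach is induction on $r$, using Lemma~\ref{lem:subdirect} as both the base case ($r=2$) and the inductive step. The case $r=1$ is trivial (take $\ell_1 = \ell$).

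For the inductive step, assume the result holds for $r-1$ factors and consider $H$ as a subdirect product of $H_1 \times H'$, where $H' = H\pi'$ is the image of $H$ under the projection $\pi'\: H \to H_2 \times \cdots \times H_r$. The key observation is that $H'$ is itself a subdirect product of $H_2 \times \cdots \times H_r$: for $2 \leq i \leq r$, the projection $\pi_i'\: H' \to H_i$ satisfies $\pi' \pi_i' = \pi_i$, so the image of $H'$ in $H_i$ equals $H\pi_i = H_i$. Moreover, $N\pi'$ is a soluble (as a homomorphic image of $N$) normal subgroup of $H'$.

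Applying Lemma~\ref{lem:subdirect} to the decomposition $H \leq H_1 \times H'$ with the normal subgroup $N$, we obtain integers $\ell_1, \ell' \geq 0$ with $\ell_1 + \ell' \geq \ell$ such that $H_1/N\pi_1 \cong T^{\ell_1}$ and $H'/N\pi' \cong T^{\ell'}$. By the inductive hypothesis applied to $H'$ with the soluble normal subgroup $N\pi'$, there exist nonnegative integers $\ell_2, \dots, \ell_r$ with $\ell_2 + \cdots + \ell_r \geq \ell'$ and $H_i/(N\pi')\pi_i' \cong T^{\ell_i}$ for each $2 \leq i \leq r$. Since $(N\pi')\pi_i' = N\pi_i$, combining gives $\ell_1 + \ell_2 + \cdots + \ell_r \geq \ell_1 + \ell' \geq \ell$, as required.

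There is no genuine obstacle here: the whole content is the case $r=2$, which is the preceding lemma, and the only thing to verify in the inductive step is the routine but essential point that projecting away one factor preserves both the subdirect property and the fact that the quotient by (the image of) the soluble radical is a direct power of $T$. Once Lemma~\ref{lem:subdirect} is in hand, this is a clean induction.
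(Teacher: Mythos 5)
Your proof is correct and follows essentially the same argument as the paper: induction on $r$, projecting away the first factor, applying Lemma~\ref{lem:subdirect} to $H$ as a subdirect product of $H_1$ and the image $H'$, and then invoking the inductive hypothesis for $H'$ with the soluble normal subgroup $N\pi'$, using $(N\pi')\pi_i' = N\pi_i$. Nothing differs in substance from the paper's proof.
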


\begin{proof}
We proceed by induction on $r$, noting that the result certainly holds when $r=1$. Now assume that $r > 1$. Let $\pi_0\:H \to H_2 \times \cdots \times H_r$ be the projection to $H_2 \times \cdots \times H_r$ and let $H_0 = \im\pi_0$. Then $H$ is a subdirect product of $H_1 \times H_0$. Therefore, by Lemma~\ref{lem:subdirect} we can fix integers $\ell_1, \ell_0 \geq 0$ satisfying $\ell_1+\ell_0 \geq \ell$ such that $H_1/N\pi_1 = T^{\ell_1}$ and $H_0/N\pi_0 = T^{\ell_0}$. Since $N\pi_0$ is soluble and $H_0$ is a subdirect product of $H_2 \times \cdots \times H_r$, by induction, there exist integers $\ell_2, \dots, \ell_r \geq 0$ satisfying $\ell_2 + \cdots + \ell_r \geq \ell_0$ such that $H_i/N\pi_i = H_i/N\pi_0\pi_i \cong T^{\ell_i}$ for all $2 \leq i \leq r$. Therefore, $\ell_1 + \cdots + \ell_r \geq \ell$ and $H_i/N\pi_i \cong T^{\ell_i}$ for all $1 \leq i \leq r$, which completes the induction.
\end{proof}

\section{Proofs} \label{s:proofs}

We now prove our main results. The proofs of Theorems~\ref{thm:simple} and~\ref{thm:primitive} are both inspired by the proof of \cite[Theorem~(II')]{ref:FeitTits78}.

\begin{proof}[{\bf Proof of Theorem~\ref{thm:simple}}]
For a contradiction, suppose otherwise, and choose $m$ minimally for a counterexample. Let $N = \ker\gamma$. 

We claim that $N$ is the unique maximal normal subgroup of $H$. To see this, let $K$ be a proper normal subgroup of $H$. Since $K < H$ we have $K\gamma < G$, so $K\gamma = 1$ since $G$ is simple. Therefore, $K \leq N$, as claimed. 

In particular, $\ker\lambda \leq N$, so $G$ is a quotient of $H\lambda$. Therefore, by replacing $H$ by $H\lambda$ we may assume that $\lambda$ is faithful. Let $\eta\: \widetilde{H} \to H$ be a finite central extension such that $\lambda$ lifts to a faithful representation $\widetilde{\lambda}\: \widetilde{H} \to \GL_m(k)$, and, if $\text{char}\, k \neq 2$, then choose $\widetilde{H}$ such that $\widetilde{H}\widetilde{\lambda}$ contains the subgroup $Z_4$ of $Z(\GL_m(k))$ of order $4$. Let $\widetilde{\gamma} = \eta\gamma$ and let $\widetilde{N} = \ker\widetilde{\gamma}$. 

We claim that $\widetilde{N}$ is nilpotent. Let $S$ be a Sylow subgroup of $N$. By the Frattini argument, $H = N_H(S)N$, so $N_H(S)\gamma = G$, which implies that $H = N_H(S)$, so $S \leqn N$. Therefore, $N$ is nilpotent, and $\widetilde{N}$, being a central extension of $N$, is also nilpotent.

Let $\widetilde{M} \leq \widetilde{N}$ such that $\widetilde{M} \leqn \widetilde{H}$. We claim that either $\widetilde{\lambda}_{\widetilde{M}}$ is irreducible or $\widetilde{M}$ is a cyclic subgroup of $Z(\GL_m(k))$. Since $\widetilde{\lambda}$ is irreducible, by Clifford's Theorem, we can write $\widetilde{\lambda}_{\widetilde{M}} = \alpha_1 \oplus \cdots \oplus \alpha_k$ where $\alpha_1, \dots, \alpha_k$ are the homogenous components of $\widetilde{\lambda}_{\widetilde{M}}$ whose respective irreducible components are pairwise nonisomorphic. Moreover, $\widetilde{H}$ acts transitively on these $k$ components, and 
so, since by hypothesis the representation $\widetilde{\lambda}$ is primitive, we have $k=1$. 
Thus $\widetilde{\lambda}|_{\widetilde{M}} = \alpha_1$, which is the direct sum of $m_1$ isomorphic irreducible representations of dimension $m_2$. Therefore, $\lambda = \lambda_1 \otimes \lambda_2$ where $\lambda_1\: H \to \PGL_{m_1}(k)$ and $\lambda_2\: H \to \PGL_{m_2}(k)$ are irreducible projective representations of $H$ (see \cite[Theorem~3]{ref:Clifford37}). Suppose that $1 < m_1 < m$ and $1 < m_2 < m$. The minimality of $m$ means that $\ker\gamma \leq \ker\lambda_1$ and $\ker\gamma \leq \ker\lambda_2$, so $\ker\gamma \leq \ker\lambda$, which is a contradiction. 

Therefore, we can assume that either $m_1=1$ or $m_2=1$, which is to say, either $\widetilde{\lambda}_{\widetilde{M}}$ is irreducible, or $\widetilde{\lambda}_{\widetilde{M}}$ a direct sum of isomorphic linear representations. In the latter case, $\widetilde{M}$ is represented by scalars, so $\widetilde{M}$ is a cyclic subgroup of $Z(\GL_m(k))$.

We claim that $\widetilde{\lambda}_{\widetilde{N}}$ is irreducible. Suppose otherwise. Then the previous paragraph implies that $\widetilde{N} \leq Z(\GL_m(k))$, so $\ker\gamma = N = \widetilde{N}\eta = 1$, which is a contradiction. 

Therefore, Proposition~\ref{prop:feit-tits} yields a faithful irreducible representation $\mu\: G \to \Sp_{2n}(r)$ where $m = r^n$ for a prime $r \neq \mathrm{char}\, k$. By Lemma~\ref{lem:e_extension}, if $r \neq 2$, then $H$ is a split extension of $G$, so $\ker\gamma = 1$, which is a contradiction. Therefore, $r = 2$, so $\mathrm{char}\, k \neq 2$ and $m = 2^n \geq 2^{n_G}$, which contradicts our assumption that $m < 2^{n_G}$.
\end{proof}

\begin{proof}[{\bf Proof of Corollary~\ref{cor:simple1}}]
Let $G \in {\rm Lie}(p)$, and let $\gamma:H \to G$ be a finite minimal extension. 
The conclusion follows from Corollary \ref{cor:simple} if $p=2$, so assume that $p\ne 2$.

We first claim that either $P(G) \leq 2^{n_G}$ or $G$ is as in the table below:
\[
\begin{array}{|l|l|l|}
\hline
G & P(G) & n_G \\
\hline
\PSL_2(17) & 18 & 4 \\
\PSp_4(3) & 27 & 3 \\
\PSU_3(3) & 28 & 3 \\
G_2(3) & 351 & 7 \\
\hline
\end{array}
\]
It is a routine matter to verify the claim. The values of $P(G)$ are given by \cite[Thm. 5.2.2]{ref:KleidmanLiebeck} if $G$ is a classical group, and by \cite{ref:LiebeckSaxl} when $G$ is an exceptional group of Lie type; while $n_G \geq \frac{1}{2}R_{p'}(G)$, lower bounds for which can be found in \cite[Table 5.3.A]{ref:KleidmanLiebeck}. Comparison of these bounds gives $P(G) \leq 2^{n_G}$ apart from some small simple groups $G$ (including those in the table above), for which the precise values of $n_G$ can be read off using \cite{ref:ModularAtlas}. The claim follows.

Given the claim, the conclusion of Corollary \ref{cor:simple1} follows from Corollary \ref{cor:simple}, provided we rule out the groups $G$ in the above table. The group $\PSp_4(3)$ is excluded by hypothesis. Now consider $G = \PSL_2(17)$ (resp. $\PSU_3(3)$, $G_2(3)$). 
From \cite{ref:ModularAtlas} we see that the nontrivial irreducible $\F_2G$-modules of dimension at most $2\log_2(P(G))$ have dimensions 8 (resp. 6, 14). Suppose $\lambda:H \to \PGL_m(k)$ is irreducible of dimension $m<P(G)$ with $\ker\gamma \not \leq \ker\lambda$.
Then it follows from the proof of Theorem \ref{thm:simple} that $K:=\ker \gamma = 2^8$ (resp. $2^6$, $2^{14}$) and $m=16$ (resp. 8, 128).
Now $H$ must be a nonsplit extension of $G$ by $K$. However, a computation in \textsc{Magma} \cite{ref:Magma} shows that $H^2(G,K)=0$ for $G = \PSL_2(17)$, $G_2(3)$, which rules out these groups. The last observation does not apply to $G = \PSU_3(3)$ (as $H^2(\PSU_3(3),2^6)) \ne 0$); however, for this case we have $H\lambda = 2^6.\PSU_3(3) < \PGL_8(9) < \PGL_8(k)$, whereas a \textsc{Magma} computation reveals that this extension $2^6.\PSU_3(3)$ splits. This completes the proof.
\end{proof}

Before proving Theorem~\ref{thm:primitive} we need a result that highlights a key property of the invariant $n_G'$ defined in the introduction.

\begin{lemma} \label{lem:multiplicative}
Let $G = T^\ell$ for a nonabelian finite simple group $T$ and $\ell \geq 1$. Then $n_G' \geq n_T' \cdot 2^{\ell-1}$.  
\end{lemma}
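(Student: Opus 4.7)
The plan is to extend scalars from $\F_2$ to $\overline{\F}_2$, apply the tensor factorisation of absolutely irreducible representations of the direct product $G = T^\ell$, and conclude via a parity argument. Let $V$ be a faithful irreducible $\F_2 G$-module of $\F_2$-dimension $2n_G'$; the goal is to prove $\dim V \geq 2 n_T' \cdot 2^{\ell-1}$.

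After base change, $V \otimes_{\F_2} \overline{\F}_2$ is semisimple (since $\overline{\F}_2/\F_2$ is separable) and decomposes as a single Galois orbit $M \oplus M^\sigma \oplus \cdots \oplus M^{\sigma^{a-1}}$ of some absolutely irreducible $\overline{\F}_2 G$-module $M$, where $\sigma$ is the Frobenius. Since $\overline{\F}_2$ is algebraically closed and $G$ is a direct product, $M \cong W_1 \boxtimes \cdots \boxtimes W_\ell$ with each $W_i$ absolutely irreducible for $T$. Faithfulness of $V$ on $G$ forces each $W_i$ to be faithful, so $d_i := \dim W_i \geq 2$ by nonabelian simplicity of $T$. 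Let $a_i$ denote the Galois orbit size of $W_i$ (equivalently, the degree over $\F_2$ of its field of definition). The Frobenius acts coordinate-wise on $M$, giving $a = \mathrm{lcm}(a_1,\ldots,a_\ell)$, so $\dim V = a \cdot d_1 d_2 \cdots d_\ell$. Moreover, for each $i$ the product $a_i d_i$ is the $\F_2$-dimension of the faithful irreducible $\F_2 T$-module obtained from $W_i$ by Galois descent, and whenever this is even, the definition of $n_T'$ forces $a_i d_i \geq 2 n_T'$.

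To conclude, observe that since $\dim V = 2n_G'$ is even, either $a$ is even (in which case $\mathrm{lcm}(a_1,\ldots,a_\ell)$ being even forces some $a_k$ to be even) or some $d_k$ is even. In either case, there exists an index $k$ with $a_k d_k$ even, and for this $k$, $a_k d_k \geq 2 n_T'$. Since $a_k \mid a$, we have $a \geq a_k$, which together with $d_i \geq 2$ for each $i \neq k$ yields
\[
\dim V = a \prod_{i=1}^\ell d_i \geq a_k d_k \prod_{i \neq k} d_i \geq 2 n_T' \cdot 2^{\ell-1},
\]
so $n_G' \geq n_T' \cdot 2^{\ell-1}$. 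The key subtlety is the parity observation at the end: this is what upgrades the weaker bound $a_k d_k \geq R(T)$, where $R(T)$ is the minimum dimension of any (possibly odd-dimensional) faithful irreducible $\F_2 T$-module, to the required $a_k d_k \geq 2 n_T'$, which by definition of $n_T'$ demands even dimension.
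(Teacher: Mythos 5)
Your proof is correct, and its skeleton is the same as the paper's: take a faithful irreducible $\F_2G$-module of dimension $2n_G'$, pass to an absolutely irreducible constituent, factor it as an outer tensor product $W_1\otimes\cdots\otimes W_\ell$ of faithful modules for $T$, descend one factor to its minimal field of definition and then to $\F_2$ by restriction of scalars, and bound the remaining factors below by $2$. (The paper runs the descent through the endomorphism field $\F_{2^e}$ and \cite[Lemmas~2.10.2 and~5.5.5]{ref:KleidmanLiebeck} rather than through $\overline{\F}_2$ and Galois orbits, but these are the same mechanism.) Where you genuinely add something is in the choice of which factor to descend. The paper picks a factor, obtains a faithful irreducible $\F_2T$-module of dimension $fn$, and asserts $fn \geq 2n_T'$; since $n_T'$ only constrains irreducible embeddings into $\GL_{2n}(2)$, i.e.\ even-dimensional modules, that step implicitly requires $fn$ to be even -- odd-dimensional faithful irreducibles are not controlled by $n_T'$ (for instance $\PSL_2(7)\cong\GL_3(2)$ has a faithful irreducible $\F_2$-module of dimension $3$, while $2n_T'=8$). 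Your parity argument -- $a\prod_i d_i = 2n_G'$ is even, an lcm of odd numbers is odd, hence some $a_kd_k$ is even -- selects a factor to which the definition of $n_T'$ genuinely applies, and your factor-by-factor bookkeeping also avoids the paper's tacit assumption that all tensor factors have the same dimension (its equation $n^\ell = 2n_G'/e$). So this is the same route as the paper's, carried out more carefully at exactly the delicate point; all the auxiliary facts you invoke (multiplicity-one Galois-orbit decomposition over finite fields, uniqueness of the outer tensor factorisation, irreducibility of restriction of scalars from the minimal field of definition) are standard and correctly applied.
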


\begin{proof}
The definition of $n_G'$ guarantees the existence of a faithful irreducible representation $\rho\: G \to \GL_{2n_G'}(2)$. Let $V$ be the $\F_2G$-module afforded by $\rho$, let $E = \mathrm{End}_{\F_2G}(V)$ and let $e = |E:\F_2|$. Then $e$ divides $2n_G'$ and there exists a faithful absolutely irreducible representation $\rho_1 \: G \to \GL_{2n'_G/e}(2^e)$ (see \cite[Lemma~2.10.2]{ref:KleidmanLiebeck}). Since $G = T^\ell$, there exists a faithful absolutely irreducible representation $\rho_2 \: T \to \GL_n(2^e)$ where $n^\ell = 2n_G'/e$ (see \cite[Lemma~5.5.5]{ref:KleidmanLiebeck}). Let $f$ be minimal such that $\rho_2$ is expressible over the subfield $\F_{2^f} \subseteq \F_{2^e}$, and consider the corresponding faithful absolutely irreducible representation $\rho_3 \: T \to \GL_n(2^f)$. Via the field extension embedding $\GL_n(2^f) \preccurlyeq \GL_{fn}(2)$, we obtain a faithful irreducible representation $\rho_4\: T \to \GL_{fn}(2)$. Therefore, $fn \geq 2n_T'$. We now conclude that
\[
2n_T' \leq fn \leq en \leq \frac{en^\ell}{2^{\ell-1}} \leq \frac{n_G'}{2^{\ell-2}},
\]
and hence $n_G' \geq n_T' \cdot 2^{\ell-1}$, as required.
\end{proof}

\begin{proof}[{\bf Proof of Theorem~\ref{thm:primitive}}]
For a contradiction, suppose otherwise, and choose $m$ minimally for a counterexample. Let $N = \ker\gamma$.

Since $\lambda$ is faithful, by replacing $H$ with $H\lambda$, we assume that $H \leq \PGL_m(k)$. Let $\eta\: \widetilde{H} \to H$ be a finite central extension such that $\widetilde{H} \leq \GL_m(k)$, and, if $\mathrm{char}\, k \neq 2$, then choose $\widetilde{H}$ to contain the subgroup $Z_4$ of $Z(\GL_m(k))$ of order $4$. Let $\widetilde{\gamma} = \eta\gamma$ and let $\widetilde{N} = \ker\widetilde{\gamma}$. 

As in the proof of Theorem~\ref{thm:simple}, $\widetilde{N}$ is nilpotent. Let $\widetilde{M} \leq \widetilde{N}$ such that $\widetilde{M} \leqn \widetilde{H}$. We claim that either $\widetilde{M}$ is irreducible or $\widetilde{M}$ is a cyclic subgroup of $Z(\widetilde{H})$. Since $\widetilde{H}$ is primitive, by Clifford's theorem, the representation of $\widetilde{M}$ is the direct sum of $m_1$ isomorphic irreducible representations of dimension $m_2$. Therefore, $\lambda  = \lambda_1 \otimes \lambda_2$ where $\lambda_1\: H \to \PGL_{m_1}(k)$ and $\lambda_2\: H \to \PGL_{m_2}(k)$ are irreducible projective representations of $H$ (see \cite[Theorem~3]{ref:Clifford37}). Moreover, $\lambda_1$ and $\lambda_2$ are both primitive since $\lambda$ is. 

Suppose that $1 < m_1 < m$ and $1 < m_2 < m$. For $i \in \{1,2\}$, let $K_i = \ker\lambda_i$ and $H_i = H/K_i$, and note that we have a faithful primitive projective representation of $H_i$ afforded by the embedding $H_i \cong \im\lambda_i \leq \PGL_{m_i}(k)$. Let $N_i = NK_i/K_i$ and let $\gamma_i\: H_i \to H_i/N_i$ be the corresponding quotient map. Now $H$ is isomorphic to a subdirect product of $H_1 \times H_2$, so Lemma~\ref{lem:subdirect} implies that $H_i/N_i = T^{\ell_i}$ for integers $\ell_1,\ell_2 \geq 0$ such that $\ell_1 + \ell_2 \geq \ell$. Suppose that there exists $i \in \{1,2\}$ and $X < H_i$ such that $X\gamma_i = H_i/N_i$. Then $XN_i/N_i = H_i/N_i$, so $XN_i = H_i$. Since $H_i = H/K_i$, we can write $X = Y/K_i$ for some $K_i \leq Y < H$. Then $(Y/K_i) (NK_i/K_i) = H/K_i$, so $YN = H$ since $K_i \leq Y$. By assumption, this means that $Y = H$, so $X = H_i$, which is a contradiction. Therefore, for $i \in \{1,2\}$, if $X < H_i$, then $X\gamma_i < H_i/N_i$. For now assume that $\mathrm{char}\, k \neq 2$. We claim that $m_i < \ell_i \cdot 2^{n_T'}$ for $i \in \{1,2\}$. Write $\{i,j\} = \{1,2\}$. Suppose that $m_i \geq \ell_i \cdot 2^{n_T'}$. Since $m < \ell \cdot 2^{n_T'}$, we have 
\[
m_j = m/m_i < \ell/\ell_i \leq 2(\ell-\ell_i) \leq 2\ell_j. 
\]
Note that $m_j < \ell_j \cdot 2^{n_T'}$ since $n_T' \geq 1$. Therefore, the minimality of $m$ implies that $\gamma_j$ is an isomorphism, so $T^{\ell_j} = H_j$ is an irreducible subgroup of $\PGL_{m_j}(k)$. Writing $d_p(T)$ for the smallest dimension $d$ of an irreducible projective representation $T \to \PGL_d(k)$, by \cite[Proposition~5.5.7(ii)]{ref:KleidmanLiebeck}, we have $m_j \geq d_p(T)^{\ell_j} \geq 2^{\ell_j} \geq 2\ell_j$, which contradicts the displayed inequality. Therefore, $m_i < \ell_i \cdot 2^{n_T'}$ as claimed. Since we have verified all of the conditions of the statement, the minimality of $m$ implies that $\gamma_1$ and $\gamma_2$ are isomorphisms. This implies that $N \leq K_1$ and $N \leq K_2$, so $N \leq K_1 \cap K_2 = 1$, which is a contradiction.

Therefore, we can assume that either $m_1=1$ or $m_2=1$, which is to say, either $\widetilde{M}$ is irreducible, or $\widetilde{M}$ a direct sum of isomorphic linear representations. In the latter case, $\widetilde{M}$ is represented by scalars, so $\widetilde{M}$ is a cyclic subgroup of $Z(\GL_m(k))$.

We claim that $\widetilde{\lambda}_{\widetilde{N}}$ is irreducible. Suppose otherwise. Then the previous paragraph implies that $\widetilde{N} \leq Z(\GL_m(k))$, so $\ker\gamma = N = \widetilde{N}\eta = 1$, which is a contradiction. 

Therefore, Proposition~\ref{prop:feit-tits} yields a faithful irreducible representation $\mu\: G \to \Sp_{2n}(r)$ where $m = r^n$ for a prime $r \neq \mathrm{char}\, k$. By Lemma~\ref{lem:e_extension}, if $r \neq 2$, then $H$ is a split extension of $G$, so $\ker\gamma = 1$, which is a contradiction. Therefore, $r = 2$, so $\mathrm{char}\, k \neq 2$ and, using Lemma~\ref{lem:multiplicative}, 
\[
m = 2^n \geq 2^{n_G} \geq 2^{n_G'} = 2^{2^{\ell-1}(n_T')^\ell} \geq \ell \cdot 2^{n_T'},
\] 
which contradicts our assumption that $m < \ell \cdot 2^{n_T'}$.
\end{proof}

\vspace{5pt}

\begin{multicols}{2}
\noindent Scott Harper \newline
School of Mathematics and Statistics \newline
University of St Andrews \newline
St Andrews, KY16 9SS, UK \newline
\texttt{scott.harper@st-andrews.ac.uk}

\noindent Martin W. Liebeck \newline
Department of Mathematics \newline
Imperial College London \newline
London, SW7 2BZ, UK \newline
\texttt{m.liebeck@imperial.ac.uk}
\end{multicols}

\end{document}